\newcommand{\beq}{\begin{equation}}
\newcommand{\eeq}{\end{equation}}
\newcommand{\ben}{\begin{eqnarray}}
\newcommand{\een}{\end{eqnarray}}
\newcommand{\beno}{\begin{eqnarray*}}
\newcommand{\eeno}{\end{eqnarray*}}
\theoremstyle{plain}
\newtheorem{theorem}{Theorem}[section]
\newtheorem{proposition}[theorem]{Proposition}
\newtheorem{lemma}[theorem]{Lemma}
\newtheorem{remark}[theorem]{Remark}
\numberwithin{theorem}{section} \numberwithin{equation}{section}
\renewcommand{\theequation}{\thesection.\arabic{equation}}
\newcommand{\average}{{\mathchoice {\kern1ex\vcenter{\hrule height.4pt
width 6pt depth0pt} \kern-9.7pt} {\kern1ex\vcenter{\hrule height.4pt
width 4.3pt depth0pt} \kern-7pt} {} {} }}
\newtheorem{thm}{Theorem}[section]
\newtheorem*{rmks*}{Remarks}
\newtheorem*{rmk*}{Remark}
\def\R{\mathbb{R}}
\renewcommand{\phi}{\varphi}
\newcommand{\be}{\begin{equation}}
\newcommand{\ee}{\end{equation}}
\newcommand{\C}{\mathbb{C}}
\newcommand{\N}{\mathbb{N}}
\newcommand{\cS}{{\mathcal S}}
\newcommand{\weak}{\rightharpoonup}
\newcommand{\eps}{\varepsilon}
\renewcommand{\theequation}{\thesection.\arabic{equation}}
\renewcommand{\epsilon}{\varepsilon}
\newcommand{\qq}{\mathsf{q}}
\newcommand{\CC}{\mathsf{C}}
\begin{document}
\title[Symmetry Breaking for Ground States of Biharmonic NLS]{Symmetry breaking for ground states of \\ biharmonic NLS via Fourier extension estimates}
\author{Enno Lenzmann}
\address{Enno Lenzmann, Departement Mathematik und Informatik, Universit\"at Basel, Spiegelgasse 1, CH-4051 Basel, Switzerland}%
\email{enno.lenzmann@unibas.ch}

\author{Tobias Weth}
\address{Tobias Weth, Institut f\"ur Mathematik, Goethe-Universit\"at Frankfurt, Robert-Mayer-Str.~10, D-60629 Frankfurt am Main, Germany}
\email{weth@math.uni-frankfurt.de}
\maketitle

\begin{abstract}
We consider ground states solutions $u \in H^2(\R^N)$ of biharmonic (fourth-order) nonlinear Schr\"odinger equations of the form
$$
\Delta^2 u + 2a \Delta u + b u - |u|^{p-2} u = 0 \quad \mbox{in $\R^N$}
$$
with positive constants $a, b > 0$ and exponents $2 < p < 2^*$, where $2^* = \frac{2N}{N-4}$ if $N > 4$ and $2^* = \infty$ if $N \leq 4$. By exploiting a connection to the adjoint Stein--Tomas inequality on the unit sphere and by using trial functions due to Knapp, we prove a general symmetry breaking result by showing that all ground states $u\in H^2(\R^N)$ in dimension $N \geq 2$ fail to be radially symmetric for all exponents $2 < p < \frac{2N+2}{N-1}$ in a suitable regime of $a,b>0$. 

As applications of our main result, we also prove symmetry breaking for a minimization problem with constrained $L^2$-mass and for a related problem on the unit ball in $\R^N$ subject to Dirichlet boundary conditions. 
\end{abstract}

\vskip 0.2cm

\renewcommand{\theequation}{\thesection.\arabic{equation}}
\setcounter{equation}{0}
\section{Introduction}

The study of biharmonic (fourth-order) nonlinear Schr\"odinger equations (NLS) has attracted a significant amount of attention in the recent past; see e.\,g.~\cite{KaSh-00, BaKoSa-00, FiIlPa-02, Pa-07, Pa-09, Pa-13, BoLe-17, BoCaDsNa-18, BoCaGoJe-19, BoCaGoJe-19b, FeJeMaMa-20}. The main purpose of the present paper is to prove a \textbf{symmetry breaking result} for ground state solutions  of biharmonic NLS (suitably defined as energy minimizers), which is in striking contrast to the well-known results of radial symmetry for ground states of classical second-order NLS. As a key ingredient in our approach to show symmetry breaking, we shall exploit a close connection between Fourier extension estimates and ground states for suitable biharmonic NLS. 

As a concrete model problem, we consider ground state solutions $u \in H^2(\R^N)$ of biharmonic NLS of the form
\begin{equation}
  \label{eq:biharmonic-general-a-b}
\Delta^2 u + 2 a \Delta u +b u - |u|^{p-2}u = 0 \qquad \text{in $\R^N$,}
\end{equation}
where $a, b > 0$ are positive constants. We remark that positivity of $a >0$ implies that the Fourier symbol of the `mixed dispersion' differential operator $\Delta^2 + 2 a \Delta$ is radially symmetric but fails to be monotone increasing in the radial variable. Throughout the following, we consider the subcritical case with $2<p<2^*$, where we set 
$$
2^* := \begin{dcases*} \frac{2N}{N-4} & for $N > 4$, \\
\infty & for $N \leq 4$ \end{dcases*}
$$ 
Furthermore, the constants $a,b >0$  in \eqref{eq:biharmonic-general-a-b} are chosen such that the associated quadratic form 
$$
\qq_{a,b}(u) = \int_{\R^N}\Bigl( |\Delta u|^2- 2 a  |\nabla u|^2 + b |u|^2 \Bigr)\,dx  = \int_{\R^N} g_{a,b}(|\xi|)| \hat u(\xi)|^2 \,d\xi
$$
is positive definite on the Sobolev space $H:= H^2(\R^N)$. Here $\hat{u}$ denotes the Fourier transform of $u$ and the corresponding symbol $g_{a,b}(|\xi|)$ above reads
$$
g_{a,b}(|\xi|)= |\xi|^4 - 2 a |\xi|^2 + b = (|\xi|^2-a)^2 + b-a^2.
$$ 
Thus we readily see that $\qq_{a,b}$ is positive definite if and only if $b>a^2$ holds. In this situation, we find that 
\begin{equation}
  \label{eq:def-kappa-a-b}
R_{a,b}(p):= \inf_{u \in H \setminus \{0\} } \frac{\qq_{a,b}(u)}{ \|u\|_p^2 } =\inf_{u \in H, \|u\|_p = 1}\qq_{a,b}(u)>0,
\end{equation}
and that this infimum is attained. Here $\|\cdot\|_p$ denotes the usual $L^p(\R^N)$-norm. More precisely, the following result follows from classical arguments.

\begin{thm}
  \label{sec:prop-intro}
  Suppose $N \geq 1, a > 0, b > 0$, and $2 <p < 2^*$. Then we have $R_{a,b}(p)>0$ if and only if $b>a^2$. Moreover, if $b>a^2$, then
  $R_{a,b}(p)$ is attained in $H \setminus \{0\}$, and every minimizer $u \in H \setminus \{0\}$ corresponds, after multiplication with a positive factor, to a solution of (\ref{eq:biharmonic-general-a-b}). Finally, any minimizer $u \in H \setminus \{0\}$ is real-valued up to a trivial constant complex phase, i.\,e., we have $e^{i \theta} u(x) \in \R$ for a.\,e.~$x \in \R$ and some $\theta \in \R$. 
\end{thm}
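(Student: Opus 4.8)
The plan is to prove the four assertions in order by classical variational arguments, the genuinely delicate one being the last. Write $g_{a,b}(r)=(r^2-a)^2+(b-a^2)$. If $b>a^2$, then $g_{a,b}(r)\ge c(1+r^4)$ for some $c=c(a,b)>0$, so $\qq_{a,b}$ is equivalent to the square of the $H^2$-norm; together with the Sobolev embedding $H^2(\R^N)\hookrightarrow L^p(\R^N)$ (valid since $2<p<2^*$) this gives $\|u\|_p^2\lesssim\qq_{a,b}(u)$, whence $R_{a,b}(p)>0$. If $b<a^2$, then $g_{a,b}$ is strictly negative on the nonempty annulus $\{\, \bigl| |\xi|^2-a \bigr|<\sqrt{a^2-b}\,\}$, so any $u\ne0$ with $\hat u$ compactly supported there has $\qq_{a,b}(u)<0$, forcing $R_{a,b}(p)\le0$. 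If $b=a^2$, then $g_{a,b}=(r^2-a)^2\ge0$, so $R_{a,b}(p)\ge0$; to get $R_{a,b}(p)=0$ I would use the trial functions $u_\delta$ with $\hat u_\delta(\xi)=\psi\bigl((|\xi|-\sqrt a)/\delta\bigr)$ for a fixed nonnegative $\psi\in C_c^\infty(\R)$, $\psi\not\equiv0$: on $\supp\hat u_\delta$ one has $\bigl| |\xi|^2-a \bigr|\lesssim\delta$, so $\qq_{a,a^2}(u_\delta)\lesssim\delta^2\|\hat u_\delta\|_2^2\lesssim\delta^3$, while $\hat u_\delta$ lives in a fixed ball with $u_\delta(0)$ a positive multiple of $\int\hat u_\delta\sim\delta$, so by Bernstein $|u_\delta|\gtrsim\delta$ on a ball of fixed radius and $\|u_\delta\|_p\gtrsim\delta$; hence $\qq_{a,a^2}(u_\delta)/\|u_\delta\|_p^2\lesssim\delta\to0$.

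For the attainment (assume $b>a^2$) I would use the direct method with concentration-compactness. A minimizing sequence $(u_n)$ with $\|u_n\|_p=1$, $\qq_{a,b}(u_n)\to R_{a,b}(p)$, is bounded in $H^2$ by coercivity; vanishing is excluded by Lions' lemma (in its $H^2$-form), so after translating we may assume $\int_{B_1}|u_n|^p\ge\delta_0>0$, hence $u_n\weak u\ne0$ in $H^2$ (along a subsequence), with $u_n\to u$ in $L^p_\loc$ and a.e. Putting $\lambda=\|u\|_p^p\in(0,1]$, Brezis--Lieb gives $\|u_n-u\|_p^p\to1-\lambda$, and since $u_n-u\weak0$ one has $\qq_{a,b}(u_n)=\qq_{a,b}(u)+\qq_{a,b}(u_n-u)+o(1)$; combining this with $\qq_{a,b}(u)\ge R_{a,b}(p)\lambda^{2/p}$ and $\liminf\qq_{a,b}(u_n-u)\ge R_{a,b}(p)(1-\lambda)^{2/p}$ yields $1\ge\lambda^{2/p}+(1-\lambda)^{2/p}$, and since $2/p<1$ one has $\lambda^{2/p}+(1-\lambda)^{2/p}>1$ for $\lambda\in(0,1)$, forcing $\lambda=1$. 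Thus $u_n\to u$ in $L^p$, $\|u\|_p=1$, and weak lower semicontinuity of $\qq_{a,b}$ gives $\qq_{a,b}(u)\le R_{a,b}(p)$, so $u$ is a minimizer. The Lagrange multiplier rule on the real Hilbert space $H$ then gives $\Delta^2u+2a\Delta u+bu=\kappa|u|^{p-2}u$ weakly, with $\kappa=\qq_{a,b}(u)/\|u\|_p^p=R_{a,b}(p)\|u\|_p^{2-p}>0$ (testing with $u$); hence $v:=\kappa^{1/(p-2)}u$ solves \eqref{eq:biharmonic-general-a-b}.

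For the last assertion, write $u=u_1+iu_2$ with $u_1,u_2$ real. Since $\Delta^2+2a\Delta+b$ has real coefficients, $\qq_{a,b}(u)=\qq_{a,b}(u_1)+\qq_{a,b}(u_2)$, while Minkowski's inequality in $L^{p/2}$ (here $p/2>1$) gives $\|u\|_p^2=\| |u|^2 \|_{p/2}\le\|u_1^2\|_{p/2}+\|u_2^2\|_{p/2}=\|u_1\|_p^2+\|u_2\|_p^2$. Chaining these with $\qq_{a,b}(u_j)\ge R_{a,b}(p)\|u_j\|_p^2$ and $\qq_{a,b}(u)=R_{a,b}(p)\|u\|_p^2$ forces all inequalities to be equalities; hence, unless $u_1\equiv0$ or $u_2\equiv0$ (in which case we are done), both $u_1$ and $u_2$ are minimizers, and the equality case of Minkowski's inequality gives $u_1^2=\lambda u_2^2$ a.e.\ for some $\lambda>0$. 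Write $u_2=\sigma\sqrt\mu\,u_1$ with $\mu=1/\lambda$ and $\sigma(x)\in\{\pm1\}$; by elliptic regularity $u$ is continuous, so $\{u_1\ne0\}$ is open and $\sigma$ is continuous, hence locally constant, there, and the sets $\Omega_\pm:=\{\sigma=\pm1\}\cap\{u_1\ne0\}$ are open and disjoint with $\Omega_+\cup\Omega_-=\{u_1\ne0\}$. Then $h:=u_1+u_2/\sqrt\mu$ and $h':=u_1-u_2/\sqrt\mu$ lie in $H$, satisfy $h+h'=2u_1$, $h$ equals $2u_1$ on $\Omega_+$ and vanishes elsewhere, $h'$ equals $2u_1$ on $\Omega_-$ and vanishes elsewhere; using the elementary fact that $D^2w=0$ a.e.\ on $\{w=0\}$ for $w\in H^2$, the mixed integrals $\int\Delta h\,\Delta h'$, $\int\nabla h\cdot\nabla h'$, $\int hh'$ all vanish, so $\qq_{a,b}(2u_1)=\qq_{a,b}(h)+\qq_{a,b}(h')$ and $\|2u_1\|_p^p=\|h\|_p^p+\|h'\|_p^p$. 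Since $u_1$ is a minimizer,
\[
4R_{a,b}(p)\|u_1\|_p^2=\qq_{a,b}(h)+\qq_{a,b}(h')\ge R_{a,b}(p)\bigl(\|h\|_p^2+\|h'\|_p^2\bigr)\ge R_{a,b}(p)\bigl(\|h\|_p^p+\|h'\|_p^p\bigr)^{2/p}=4R_{a,b}(p)\|u_1\|_p^2,
\]
the last inequality being $A^2+B^2\ge(A^p+B^p)^{2/p}$ for $p>2$; equality forces $\|h\|_p\|h'\|_p=0$, so $\sigma$ is globally constant and $u=(1\pm i\sqrt\mu)\,u_1$ is a constant complex multiple of a real function. (Equivalently, $h$ lies in the kernel of $\Delta^2+2a\Delta+b-R_{a,b}(p)|u|^{p-2}$ and vanishes on the open set $\Omega_-$, so unique continuation gives $h\equiv0$.)

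The step I expect to be the main obstacle is precisely this last one. For second-order NLS, real-valuedness up to a phase follows immediately by comparing $u$ with $|u|$ via the diamagnetic inequality; for the biharmonic operator this fails, since $\qq_{a,b}(|u|)$ need not be $\le\qq_{a,b}(u)$ — indeed that failure is the very mechanism behind the symmetry breaking studied in this paper — so real-valuedness must be read off the orthogonal splitting above (or from unique continuation) rather than from a rearrangement inequality. The other steps are routine, modulo the elliptic bootstrap needed so that $\{u\ne0\}$ is open and $\sigma$ continuous, and the standard concentration-compactness bookkeeping in the attainment proof.
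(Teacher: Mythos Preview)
Your proof is correct. The first three parts (the ``if and only if'', attainment via concentration--compactness, and the Euler--Lagrange identification) proceed essentially as in the paper, with the minor difference that you spell out the borderline case $b=a^2$ explicitly whereas the paper leaves the ``only if'' direction implicit.

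The genuine divergence is in the last step. Both arguments start identically: split $u=u_1+iu_2$, use that $\qq_{a,b}(u)=\qq_{a,b}(u_1)+\qq_{a,b}(u_2)$ and the equality case of Minkowski in $L^{p/2}$ to obtain $u_1^2=\lambda u_2^2$ a.e. From there the paper, following Frank--Lieb--Sabin, applies a \emph{second rotation}: it writes $u=e^{i\pi/4}(v_1+iv_2)$ with $v_1=\tfrac{1}{\sqrt2}(u_1+u_2)$, $v_2=\tfrac{1}{\sqrt2}(-u_1+u_2)$, reruns the same Minkowski equality to get $v_2^2=\beta^2 v_1^2$, and then combines the two algebraic relations to force $u_2=\pm\alpha u_1$ with a \emph{single} global sign. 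This is a purely measure-theoretic argument; no regularity of $u$ is invoked. Your route instead fixes the sign by a disjoint-support decomposition $h,h'$ of $2u_1$ and the strict inequality $\|(A,B)\|_{\ell^2}>\|(A,B)\|_{\ell^p}$ for $p>2$ unless $AB=0$; this is equally valid and arguably more geometric. One comment: the elliptic bootstrap you flag as a prerequisite is in fact not needed for your main line --- since $h,h'\in H^2$ are honest linear combinations of $u_1,u_2$ with a.e.-disjoint supports, the Stampacchia fact ``$D^2w=0$ a.e.\ on $\{w=0\}$ for $w\in H^2$'' alone kills the cross terms, so continuity of $\sigma$ is dispensable. (Your parenthetical unique-continuation alternative is also sound but relies on less standard machinery for fourth-order operators.) The paper's double-rotation buys a shorter, regularity-free conclusion; your decomposition buys a more self-contained endgame that avoids the clever trick.
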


For the convenience of the reader, we provide a short proof of Theorem \ref{sec:prop-intro} in the appendix. Moreover, we note that a slightly different proof of the first statement in this theorem is given in \cite[Theorem 3.6]{FeJeMaMa-20}. We say that a solution $u \in H \setminus \{0\}$ of \eqref{eq:biharmonic-general-a-b} is a \textbf{ground state solution} if the infimum $R_{a,b}(p)$ in (\ref{eq:def-kappa-a-b}) is attained at the function $u$. To justify this notion, we recall that the energy functional associated with (\ref{eq:biharmonic}) is given by 
$$
E_{a,b}(u)= \frac{1}{2} \qq_{a,b}(u) - \frac{1}{p} \|u\|_p^p.
$$
A standard argument shows that the least energy value among nontrivial solutions of (\ref{eq:biharmonic-general-a-b}) is characterized as 
$$
c_{a,b} := \inf_{u \in H \setminus \{0\}} \sup_{t \ge 0}E_{a,b} (t u) = \left ( R_{a,b}(p) \right ) ^{\frac{p}{p-2}}
$$
Moreover, minimizers of the quotient in (\ref{eq:def-kappa-a-b}) correspond, up to multiplication by a positive factor, to nontrivial solutions of (\ref{eq:biharmonic-general-a-b}) where the least energy value $c_{a,b}$ is attained. This justifies the term ground state solutions. 

The main aim of the present paper is to study the asymptotics of $R_{a,b}(p)$ and the shape of ground state solutions in the limiting case where $b$ is slightly larger than $a^2$. Without loss of generality, by rescaling, we may assume that $a = 1$ from now on. By writing $b= 1+ \eps$ with some $\eps>0$, we thus arrive at the equation
\begin{equation}
  \label{eq:biharmonic}
\Delta^2 u + 2  \Delta u +(1+\eps) u - |u|^{p-2}u =0 \qquad \text{in $\R^N$.}
\end{equation}
Clearly, the corresponding quadratic is given by
\begin{equation}
  \label{eq:def-q-eps}
  \qq_\eps(u) = \int_{\R^N}\Bigl(|\Delta u|^2 - 2|\nabla u|^2 +(1+\eps)|u|^2\Bigr)\,dx = \int_{\R^N}g_\eps(|\xi|)|\hat u(\xi)|^2\,d\xi 
\end{equation}
where the Fourier symbol is given by
$$
g_\eps(|\xi|) = (|\xi|^2-1)^2 + \eps.
$$
Furthermore, the corresponding minimal energy quotient reads
\begin{equation}
  \label{eq:def-kappa-eps}
R_\eps(p):= \inf_{u \in H \setminus \{0\} } \frac{\qq_\eps(u) }{ \|u\|_p^2 }>0 \qquad \text{for $2<p<2^*$.}
\end{equation}
As noted above,  minimizers of the quotient in (\ref{eq:def-kappa-eps}) correspond, after multiplication by a positive factor, to ground state solutions of (\ref{eq:biharmonic}). We shall see below that, for fixed $p \in (2,2^*)$, the value $R_\eps(p)$ tends to zero as $\eps \to 0^+$. In fact, a detailed analysis of this limit shows that ground states $u \in H$ \emph{cannot} be radially symmetric for sufficiently small $\eps > 0$ and for exponents  
$$
2 < p < 2_*.
$$  
Here we define the exponent
$$
2_* := \frac{2N+2}{N-1},
$$
which arises from the adjoint version of the celebrated \textbf{Stein--Tomas inequality} for the Fourier restriction on the sphere $S^{N-1} \subset \R^N$ in dimension $N \geq 2$. More precisely, we have the following main result.
\begin{thm}[Symmetry Breaking]
  \label{theorem-nonradial}
If $N \ge 2$ and $2< p< 2_*$, then there exists $\eps_0= \eps_0(p)>0$ with the property that
  every ground state solution $u \in H$ of (\ref{eq:biharmonic}) is a nonradial function if $0<\eps \le \eps_0$.
\end{thm}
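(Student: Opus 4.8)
The plan is to prove the sharper statement that $R_\eps(p)<R_\eps^{\mathrm{rad}}(p)$ for all small $\eps>0$, where
\[
R_\eps^{\mathrm{rad}}(p):=\inf\Bigl\{\qq_\eps(u)/\|u\|_p^2 \ :\ u\in H\setminus\{0\}\ \text{is radial}\Bigr\}.
\]
This suffices: a ground state attains $R_\eps(p)$ by definition, so a radial ground state would be admissible for $R_\eps^{\mathrm{rad}}(p)$ and force $R_\eps^{\mathrm{rad}}(p)\le R_\eps(p)$, contradicting the strict inequality. I would establish the two bounds
\[
R_\eps(p)\le C\,\eps^{\alpha},\qquad R_\eps^{\mathrm{rad}}(p)\ge c\,\eps^{\beta},
\]
with $\alpha=\dfrac{(3-N)p+2(N+1)}{4p}$ and $\beta=1-\gamma(p)$, where $\gamma(p)=\min\bigl\{\tfrac{N(p-2)}{2p},\tfrac12\bigr\}$, and then observe the elementary fact that $\beta<\alpha$ precisely for $2<p<2_*$ (with $\alpha=\beta=\tfrac12$ at the endpoint $p=2_*$). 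For such $p$ one then has $C\eps^\alpha<c\,\eps^\beta$ once $\eps\le\eps_0(p)$, which is the assertion of the theorem.

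For the upper bound I would use a Knapp trial function adapted to the Fourier symbol $g_\eps(|\xi|)=(|\xi|^2-1)^2+\eps$. Fixing bumps $\phi,\psi$, set $\widehat{u_\eps}(\xi)=\phi\bigl(\eps^{-1/2}(|\xi|-1)\bigr)\,\psi\bigl(\eps^{-1/4}\vartheta(\xi)\bigr)$, where $\vartheta(\xi)$ is the geodesic distance on $S^{N-1}$ from $\xi/|\xi|$ to a fixed pole. On the support of $\widehat{u_\eps}$ one has $|\,|\xi|^2-1|=O(\eps^{1/2})$, hence $g_\eps=O(\eps)$, while the support has measure $\sim\eps^{(N+1)/4}$; thus $\qq_\eps(u_\eps)\lesssim\eps^{(N+5)/4}$. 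For the denominator, $u_\eps$ is essentially $\eps^{1/2}$ times the Fourier extension to $\R^N$ of the spherical cap profile $\psi(\eps^{-1/4}\vartheta)$ (the factor $\eps^{1/2}$ coming from the radial thickening), and by the classical Knapp computation this has size $\sim\eps^{(N-1)/4}$ on the dual box of dimensions $\eps^{-1/4}\times\cdots\times\eps^{-1/4}\times\eps^{-1/2}$; since that box lies where $|x|\lesssim\eps^{-1/2}$, one gets $\|u_\eps\|_p\gtrsim\eps^{1/2+(N-1)(p-1)/(4p)-1/(2p)}$. Combining the two estimates yields $R_\eps(p)\lesssim\eps^{\alpha}$. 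The matching of the angular scale $\eps^{1/4}$ with the radial scale $\eps^{1/2}$ through the curvature of $S^{N-1}$ is exactly what makes this the configuration witnessing the failure of the adjoint Stein--Tomas estimate below $2_*$.

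For the lower bound on $R_\eps^{\mathrm{rad}}(p)$, the key ingredient is a scale-dependent restriction estimate for radial functions: if $u$ is radial with $\widehat u$ supported in $\{\,|\,|\xi|-1|\le\delta\}$, $\delta\le\tfrac12$, then $\|u\|_p\lesssim\delta^{\gamma(p)}\|u\|_2$, with only a logarithmic loss at $p=\tfrac{2N}{N-1}$. I would prove this by writing $u(x)=w(|x|)$ as a Hankel transform of the radial profile of $\widehat u$ and splitting $|x|$ into the ranges $|x|\lesssim1$, $1\lesssim|x|\lesssim\delta^{-1}$, and $|x|\gtrsim\delta^{-1}$, using the decay $|\widehat{d\sigma}(x)|\lesssim|x|^{-(N-1)/2}$ in the first two ranges and, in the oscillatory range $|x|\gtrsim\delta^{-1}$, Plancherel together with the $L^1$--$L^\infty$ bound on the $\delta$-annulus (an $L^\infty$-gain of order $\delta^{1/2}\|u\|_2$). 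Given this, for a general radial $u$ I would first discard the part of $\widehat u$ supported in $\{\,|\,|\xi|-1|\ge\tfrac12\}$: there $g_\eps\ge(|\xi|^2-1)^2\gtrsim1$ and $(1+|\xi|^2)^2\lesssim(|\xi|^2-1)^2$, so by the Sobolev embedding $H^2(\R^N)\hookrightarrow L^p(\R^N)$ (valid since $p<2^*$) that part contributes $\lesssim\qq_\eps(u)^{1/2}$ to $\|u\|_p$. The remaining part is decomposed dyadically into annuli $\{\,|\,|\xi|-1|\sim2^{-k}\}$ down to the innermost scale $2^{-k}\sim\eps^{1/2}$; since $g_\eps\gtrsim\max(4^{-k},\eps)$ there, the restriction estimate gives $\|u_k\|_p\lesssim2^{-k\gamma(p)}\max(2^{-k},\eps^{1/2})^{-1}\qq_\eps(u_k)^{1/2}$. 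Summing over $k$ by Cauchy--Schwarz leads to the series $\sum_k4^{-k\gamma(p)}/\max(4^{-k},\eps)\sim\eps^{\gamma(p)-1}$, hence $R_\eps^{\mathrm{rad}}(p)\gtrsim\eps^{1-\gamma(p)}$.

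I expect the main obstacle to be the radial restriction estimate with the sharp exponent $\gamma(p)$ — in particular extracting the gain from the cancellation in the Hankel transform at the scales $|x|\gtrsim\delta^{-1}$ — together with the exponent bookkeeping needed to keep $\alpha-\beta>0$ on the whole interval $2<p<2_*$, the gap degenerating only at the Stein--Tomas endpoint $p=2_*$. The Knapp upper bound, while requiring some care with the localization onto the dual box, should be comparatively routine.
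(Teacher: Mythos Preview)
Your overall architecture coincides with the paper's: reduce to the strict inequality $R_\eps(p)<R_\eps^{\mathrm{rad}}(p)$, bound $R_\eps(p)$ from above via a Knapp test function supported on a spherical cap of angular aperture $\eps^{1/4}$ thickened radially to width $\eps^{1/2}$ (this is exactly the paper's Proposition~3.2), bound $R_\eps^{\mathrm{rad}}(p)$ from below, and compare exponents. Your exponents $\alpha=\tfrac{3-N}{4}+\tfrac{N+1}{2p}$ and $\beta=1-\gamma(p)$ agree with those in the paper's Theorems~1.5 and~2.1, including the case distinction at $p=2_*^{\mathrm{rad}}=\tfrac{2N}{N-1}$.

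The one step that differs is the radial lower bound. The paper does \emph{not} prove a thin-annulus restriction estimate and dyadically decompose. Instead, for $p>2_*^{\mathrm{rad}}$ it uses the trivial radial extension bound $\check{1_S}\in L^p$ together with Minkowski in the radial variable and Cauchy--Schwarz against the weight $g_\eps^{-1}$ to obtain $R_\eps^{\mathrm{rad}}(p)\ge\tfrac{2}{\pi}\CC_{ST}^{\mathrm{rad}}(p)\sqrt{\eps}+o(\sqrt\eps)$ with the sharp constant, and for $p\le 2_*^{\mathrm{rad}}$ it simply interpolates between $L^2$ (where $\qq_\eps(u)\ge\eps\|u\|_2^2$) and $L^q$ for some $q>2_*^{\mathrm{rad}}$. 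This route is shorter and also delivers the constants needed for the full expansion of $R_\eps(p)$; your dyadic route is a valid alternative for the symmetry-breaking statement itself, which only requires the exponents. One technical caution with your restriction estimate: in the tail $|x|\gtrsim\delta^{-1}$ for $p<2_*^{\mathrm{rad}}$, interpolating global $L^2$ against the global $L^\infty$ bound $\delta^{1/2}\|u\|_2$ only yields $\delta^{(p-2)/(2p)}$; to reach the required $\delta^{N(p-2)/(2p)}$ you must use the region-specific bound $\|u\|_{L^\infty(|x|\gtrsim\delta^{-1})}\lesssim\delta^{N/2}\|u\|_2$, which follows from the Bessel decay $|\widehat{d\sigma}(x)|\lesssim|x|^{-(N-1)/2}$ evaluated at $|x|\sim\delta^{-1}$.
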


\begin{rmks*}
  {\em 1) It is interesting to note that the nonradial ground states $u \in H$ in Theorem \ref{theorem-nonradial} can still be {\em even} functions. By the recently developed Fourier symmetrization methods in \cite{BuLeSo-2019, LeSo-21}, one can prove for $N \ge 1$ and $p \in 2 \mathbb{N}$ with $2 < p < 2^*$ that any ground $u \in H$ for all $\eps > 0$ must be an even function (up to a translation in space), i.\,e., we have $u(-x) = u(x)$ for a.\,e.~$x \in \R^N$; see Lemma~\ref{lem:even} below. In particular, this result applies in the case $N=2$ and $p=4<2_*$ which is admissible for Theorem~\ref{theorem-nonradial}. So in this case, ground state solutions are nonradial but even up to translation.

2) The evenness result given in Lemma~\ref{lem:even} below also shows that Theorem~\ref{theorem-nonradial} cannot be extended to the case $N=1$ since symmetry breaking does not occur for $p \in 2 \mathbb{N}$.
    
3) We do not expect any symmetry breaking of ground states $u \in H^2(\R^N)$ for \eqref{eq:biharmonic-general-a-b} if $a \leq 0$ holds. In this case, the corresponding Fourier symbol $g_{a,b}(|\xi|)$ becomes strictly increasing in $|\xi|$ and, by Fourier rearrangement methods from \cite{LeSo-21}, we obtain radial symmetry (up to translation) for any ground state $u \in H^2(\R^d)$ provided that $p \in 2 \mathbb{N}$ is an even integer.  Also, by maximum principles and classical rearrangement techniques, it can be shown for any $2 < p < 2^*$  that ground states $u \in H^2(\R^N)$ for \eqref{eq:biharmonic-general-a-b} must be radial (up to translation) whenever $a < 0$ satisfies $|a| > \sqrt{b}$; see \cite[Theorem 3.9]{BoCaDsNa-18}.}
\end{rmks*}

  Note that Theorem~\ref{theorem-nonradial} does not rule out symmetry breaking for exponents $p \ge 2_*$. Nevertheless, the special role of the exponent $2_*$  is highlighted by the following result, which shows that the rate of convergence of the minimal energy quotient $R_\eps(p)$ in (\ref{eq:def-kappa-eps}) is $p$-independent if $2_* \le p < 2^*$, whereas it depends nontrivially on $p$ if $2<p<2_*$.

  \begin{thm}[Expansion of $R_\eps(p)$]
  \label{main-theo-intro-eps-expansion}
  Let $N \ge 2$. For $2 < p < 2^*$, there exist constants $\CC(p)>0$ with the following properties. 
  \begin{enumerate}
  \item[(i)] If $p \ge 2_*$, we have
   \begin{equation}
     \label{eq:asymptotic-expansion-r-eps}
     R_\eps(p)= \CC(p)\sqrt{\eps} + o \bigl(\sqrt{\eps}\bigr) \qquad \text{as $\eps \to 0^+$.}
   \end{equation}
  \item[(ii)] If $2 < p <2_*$, we have 
      \begin{equation}
     \label{eq:asymptotic-lower-bound}
    R_\eps(p) \ge \CC(p) \eps^{\frac{3}{4}+ \frac{1}{2p} -\frac{N}{2}(\frac{1}{2}-\frac{1}{p})} +o\bigl(\eps^{\frac{3}{4}+ \frac{1}{2p} -\frac{N}{2}(\frac{1}{2}-\frac{1}{p})}\bigr) \qquad \text{as $\eps \to 0^+$.}
   \end{equation}
   Moreover, this asymptotic lower bound is sharp in the sense that
   \begin{equation}
     \label{eq:asymptotic-upper-bound}
     R_\eps(p) = O(\eps^{\frac{3}{4}+ \frac{1}{2p} -\frac{N}{2}(\frac{1}{2}-\frac{1}{p})}) \qquad  \text{as $\eps \to 0^+$.}
    \end{equation}
  \end{enumerate}
\end{thm}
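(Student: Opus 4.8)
The plan is to exploit that the symbol $g_\eps(|\xi|)=(|\xi|^2-1)^2+\eps$ is comparable to $\eps$ only on a shell of width $\sim\sqrt\eps$ around the unit sphere $S^{N-1}$ and grows quadratically in the distance to $S^{N-1}$ outside it; hence near-minimizers of $R_\eps(p)$ should have Fourier transform concentrated in that shell, which is the regime governed by the adjoint Fourier extension operator $\mathcal{E}$ for $S^{N-1}$. Writing $\xi=(1+\sqrt\eps\,s)\theta$ with $\theta\in S^{N-1}$, $s\in\R$, one has $g_\eps(|\xi|)=\eps(4s^2+1)+O(\eps^{3/2}|s|^3)$ and $d\xi=\sqrt\eps\,\bigl(1+O(\sqrt\eps|s|)\bigr)\,ds\,d\sigma(\theta)$, so this rescaling turns $\qq_\eps$ into (essentially) $\eps^{3/2}$ times a $(4s^2+1)$-weighted $L^2$-norm of the rescaled profile.

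\emph{Upper bounds.} For $p\ge 2_*$ I use the trial function $\hat u_\eps(\xi)=\psi\bigl((|\xi|-1)/\sqrt\eps\bigr)\,\rho(\xi/|\xi|)$, where $\psi$ is a fixed rapidly decaying profile on $\R$ and $\rho$ is chosen almost optimal for the adjoint Stein--Tomas estimate $\|\mathcal{E}\rho\|_{L^p(\R^N)}\le \mathbf{ST}_p\,\|\rho\|_{L^2(S^{N-1})}$ (valid for all $p\ge 2_*$, with sharp constant $\mathbf{ST}_p$). A direct computation gives $\qq_\eps(u_\eps)=\eps^{3/2}\bigl(1+o(1)\bigr)\,\bigl\|\sqrt{4s^2+1}\,\psi\bigr\|_{L^2(\R)}^2\,\|\rho\|_{L^2(S^{N-1})}^2$, while, using $u_\eps(x)=\sqrt\eps\int_\R\psi(s)\,\mathcal{E}\rho\bigl((1+\sqrt\eps s)x\bigr)\,ds\approx\sqrt\eps\bigl(\int_\R\psi\bigr)\mathcal{E}\rho(x)$ on the scale $|x|\lesssim\eps^{-1/2}$ — together with the fact that $\mathcal{E}\rho\in L^p(\R^N)$ for $p\ge 2_*$, which makes the contribution of $|x|\gtrsim\eps^{-1/2}$ lower order — one gets $\|u_\eps\|_{L^p}^2=\eps\,\bigl(1+o(1)\bigr)\,\bigl|\int_\R\psi\bigr|^2\,\|\mathcal{E}\rho\|_{L^p(\R^N)}^2$. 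Optimizing first over $\psi$ — the one-dimensional ratio $\int_\R(4s^2+1)|\psi|^2\,/\,\bigl|\int_\R\psi\bigr|^2$ has infimum $2/\pi$, attained by $\psi(s)=\mathrm{const}\cdot(4s^2+1)^{-1}$ — and then over $\rho$ yields $\limsup_{\eps\to0^+}R_\eps(p)/\sqrt\eps\le\CC(p):=\tfrac2\pi\,\mathbf{ST}_p^{-2}$. For $2<p<2_*$ I use instead a Knapp function: $\hat u_\eps$ is a smooth bump adapted to a box of dimensions $\sim\eps^{1/4}\times\cdots\times\eps^{1/4}\times\eps^{1/2}$ ($N-1$ tangential directions of length $\eps^{1/4}$, one radial direction of length $\eps^{1/2}$) over a cap of $S^{N-1}$; on this box $g_\eps\sim\eps$, and evaluating $\|\hat u_\eps\|_{L^2}$ together with $\|u_\eps\|_{L^p}$ on the dual box yields $R_\eps(p)=O\bigl(\eps^{\frac34+\frac1{2p}-\frac N2(\frac12-\frac1p)}\bigr)$, which is \eqref{eq:asymptotic-upper-bound}.

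\emph{Lower bounds.} I decompose $\hat u$ into the \emph{core} $\Bigl\{\bigl|\,|\xi|-1\,\bigr|\lesssim\sqrt\eps\Bigr\}$, the dyadic shells $\Bigl\{\bigl|\,|\xi|-1\,\bigr|\sim 2^{-j}\Bigr\}$ for $\sqrt\eps\lesssim 2^{-j}\le\tfrac12$, and the \emph{far} region $\Bigl\{\bigl|\,|\xi|-1\,\bigr|\gtrsim1\Bigr\}$, and use the scaled adjoint restriction estimate: if $\supp\,\hat v$ lies in a $\lambda$-neighborhood of $S^{N-1}$, then $\|v\|_{L^q(\R^N)}\le C\lambda^{\beta(q)}\|v\|_{L^2}$ with $\beta(q)=\tfrac12$ for $q\ge 2_*$ (Stein--Tomas interpolated with the trivial $L^\infty$ bound) and $\beta(q)=\tfrac{N+1}{2}\bigl(\tfrac12-\tfrac1q\bigr)$ for $2\le q\le 2_*$ (Stein--Tomas interpolated with Plancherel). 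Since $g_\eps\gtrsim\lambda^2$ on a $\lambda$-shell with $\lambda\gtrsim\sqrt\eps$ and $g_\eps\gtrsim\eps$ on the core, this controls the $L^p$-norm of each dyadic piece by $\lambda^{\beta(p)-1}\,\qq_\eps(u)^{1/2}$ (with $\lambda\sim\sqrt\eps$ for the core); since $\beta(p)<1$ the dyadic sum is geometric and dominated by the smallest scale $\lambda\sim\sqrt\eps$, while the far part is controlled by the $H^2$-norm, hence by $\qq_\eps(u)^{1/2}$, and is of lower order. Summing gives $\|u\|_{L^p}\lesssim\eps^{(\beta(p)-1)/2}\,\qq_\eps(u)^{1/2}$, i.e. $R_\eps(p)\gtrsim\eps^{1-\beta(p)}$; for $p\ge 2_*$ this is $\gtrsim\sqrt\eps$, and for $2<p<2_*$ the elementary identity $1-\tfrac{N+1}{2}\bigl(\tfrac12-\tfrac1p\bigr)=\tfrac34+\tfrac1{2p}-\tfrac N2\bigl(\tfrac12-\tfrac1p\bigr)$ gives \eqref{eq:asymptotic-lower-bound} (with some constant $\CC(p)>0$).

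It remains to upgrade, for $p\ge 2_*$, the bound $R_\eps(p)\gtrsim\sqrt\eps$ to $R_\eps(p)\ge(\CC(p)+o(1))\sqrt\eps$, which I would do by a concentration--compactness argument. Taking minimizers $u_\eps$ normalized by $\|u_\eps\|_{L^p}=1$, one gets $\|u_\eps\|_{L^2}^2=O(\eps^{-1/2})$ from $g_\eps\ge\eps$, and from $g_\eps\gtrsim\eps^{1/2}$ outside the $\eps^{1/4}$-shell that the $L^2$-mass of $\hat u_\eps$ there is $O(1)=o(\eps^{-1/2})$, so all mass concentrates near $S^{N-1}$. Applying the normal rescaling, setting $\widetilde F_\eps(s,\theta)=\sqrt\eps\,\hat u_\eps\bigl((1+\sqrt\eps s)\theta\bigr)$ (which is bounded in the $(4s^2+1)$-weighted $L^2$-norm), extracting a weak limit $\widetilde F$, and combining weak lower semicontinuity of that weighted norm with the one-dimensional inequality $\int_\R(4s^2+1)|\psi|^2\ge\tfrac2\pi|\int_\R\psi|^2$ in the normal variable and the adjoint Stein--Tomas inequality in the angular variable $\widetilde G(\theta):=\int_\R\widetilde F(s,\theta)\,ds$, one is led to $\liminf_{\eps\to0^+}R_\eps(p)/\sqrt\eps\ge\tfrac2\pi\,\mathbf{ST}_p^{-2}=\CC(p)$, which together with the upper bound proves \eqref{eq:asymptotic-expansion-r-eps}. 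The main obstacle is precisely this last step: one must rule out that the $L^p$-mass of $u_\eps$ escapes to spatial infinity or disperses as $\eps\to0$ — so that the constraint $\|u_\eps\|_{L^p}=1$ passes to the limit as $\|\mathcal{E}\widetilde G\|_{L^p}\ge 1$ — which is genuinely delicate at the endpoint $p=2_*$, where $\mathcal{E}$ is not compact and a profile decomposition (in the spirit of the known one for the Stein--Tomas inequality) seems to be needed; and one must justify that the normal and angular variables decouple in the limit, i.e. that the region $|x|\gtrsim\eps^{-1/2}$, where $e^{i\sqrt\eps\,s\,x\cdot\theta}$ is no longer $\approx 1$, contributes negligibly to $\|u_\eps\|_{L^p}$.
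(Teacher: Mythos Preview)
Your upper-bound arguments are essentially those of the paper: for $p\ge 2_*$ the paper also uses a test function of the form $\hat u_\eps(\xi)=\phi(|\xi|)\,w(\xi/|\xi|)$ with $w$ a (near-)optimizer for Stein--Tomas, choosing directly $\phi=g_\eps^{-1}\,1_{\{|r-1|\le\eps^s\}}$ for some $s<\tfrac12$, which is precisely your optimal radial profile $\psi(s)\propto(4s^2+1)^{-1}$; and for $2<p<2_*$ the paper uses the same Knapp cap construction. Your dyadic lower bound giving the correct power of $\eps$ is also correct, though the paper does part~(ii) more simply by the direct interpolation $\|u\|_p\le\|u\|_2^{1-\alpha}\|u\|_{2_*}^{\alpha}$ with $\alpha=(N+1)(\tfrac12-\tfrac1p)$, combined with $\eps\|u\|_2^2\le\qq_\eps(u)$ and part~(i).

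The substantive difference is in the sharp lower bound for $p\ge 2_*$. You propose concentration--compactness on rescaled minimizers and honestly flag the obstacles (loss of $L^p$-mass, decoupling of radial and angular variables, the endpoint $p=2_*$). The paper bypasses all of this with a direct inequality. For $u$ with $\hat u$ supported in $A_\delta=\{\,||\xi|-1|\le\delta\,\}$ ($\delta\in(0,1)$ fixed), write $u$ in polar coordinates, apply Minkowski in the radial variable, then Stein--Tomas on each sphere of radius $r$, and finally Cauchy--Schwarz in $r$ with weights $g_\eps(r)^{-1}$ and $g_\eps(r)$:
\[
\|u\|_p \;\le\; \CC_{ST}(p)^{-1/2}\,(1+\delta)^{\frac{N-1}{2}-\frac{N}{p}}\Bigl(\int_{1-\delta}^{1+\delta} g_\eps(r)^{-1}\,dr\Bigr)^{1/2}\,\qq_\eps(u)^{1/2}.
\]
Since $\sqrt\eps\int_{1-\delta}^{1+\delta}g_\eps^{-1}\,dr\to\pi/2$ as $\eps\to 0^+$, this gives $\liminf R_\eps(p)/\sqrt\eps\ge (1+\delta)^{\frac{2N}{p}-(N-1)}\,\tfrac{2}{\pi}\CC_{ST}(p)$, and letting $\delta\to 0$ yields the sharp constant. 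For a general minimizer one splits $\hat u_\eps=\hat u_\eps 1_{A_\delta}+\hat u_\eps 1_{A_\delta^c}$; the second piece satisfies $\qq_\eps\gtrsim_\delta\|\cdot\|_{H^2}^2$, hence $\|\cdot\|_p=o(1)$, so the first piece still has $\|\cdot\|_p=1-o(1)$. The key point is that equality in this weighted Cauchy--Schwarz occurs exactly when the radial profile is $\propto g_\eps^{-1}$, which is the test function used for the upper bound; this is why the argument captures the sharp constant without any compactness, profile decomposition, or delicate endpoint analysis. Your route could in principle be pushed through, but the paper's argument is both shorter and complete.
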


\begin{rmks*} {\em 
    1) The constants $\CC(p)$ are characterized in (\ref{eq:characterization-C-p}) below. 
    
    2) Note that $\frac{3}{4}+ \frac{1}{2p} -\frac{N}{2}(\frac{1}{2}-\frac{1}{p}) = \frac{1}{2}$ for $p=2_*$ holds. Thus the dependence on the exponent on $p$ is continuous. 

3) In \cite{FeJeMaMa-20}[Proposition 3.7], the authors derive the general upper bound $R_\eps(p) \leq C \sqrt{\eps}$ for $\eps > 0$ sufficiently small and some constant $C > 0$ depending on $p$ and $N$. However, such a bound will not be sufficient to conclude the symmetry breaking result in Theorem \ref{theorem-nonradial}.
 }
\end{rmks*}

Next, we discuss an application of Theorem~\ref{theorem-nonradial} to the associated energy minimization problem with fixed mass (i.e., $L^2$-norm), which has been studied recently in \cite{FeJeMaMa-20}. For this we consider the energy functional
\begin{equation}
  \label{eq:def-tilde-E}
  \tilde E: H \to \R, \qquad \tilde E(u) = \int_{\R^N} |\Delta u|^2\,dx - 2 \int_{\R^N} |\nabla u|^2\,dx - \frac{2}{p} \int_{\R^N} |u|^p\,dx 
\end{equation}
and the fixed mass constraint given by  
\begin{equation}
  \label{eq:def-s-m}
S(m):= \Bigl \{ u \in H\::\: \int_{\R^N}|u|^2\,dx = m \Bigr\}.
\end{equation}
As discussed in detail in \cite{FeJeMaMa-20}, both the energy $\tilde E$ and the set $S(m)$ are invariant under the corresponding biharmonic nonlinear Schr\"odinger flow. As a consequence of this invariance, the problem of minimizing $\tilde E$ on $S(m)$ is closely related to orbital stability properties of the set of associated minimizers. In \cite[Theorems 1.2 and 1.3]{FeJeMaMa-20}, it is proved that, for every $m>0$, the infimum of $\tilde E$ on $S(m)$ is attained in the mass-subcritical case where
$2<p<\max(4,\frac{2(N+5)}{N+1})$ and $p<2 + \frac{8}{N}$, and every minimizer $u \in S(m)$ is a ground state solution of (\ref{eq:biharmonic}) for some $\eps= \eps(m)$, whereas $\eps(m) \to 0^+$ as $m \to 0$. The following theorem on symmetry breaking in the case of small fixed mass is an immediate corollary of these results and Theorem~\ref{theorem-nonradial}.

\begin{thm}
  \label{theorem-nonradial-fixed-norm}
Let $N \ge 2$, and suppose that $2<p<\frac{14}{3}$ if $N=2$ and $2 <p< 2_*$ if $N \ge 3$. Then there exists $m_0= m_0(p)>0$ with the property that for every $0<m<m_0(p)$ all minimizers of $\tilde E$ on $S(m)$ are nonradial functions.
\end{thm}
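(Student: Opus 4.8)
The plan is to obtain Theorem~\ref{theorem-nonradial-fixed-norm} as a direct consequence of Theorem~\ref{theorem-nonradial} combined with the results of \cite{FeJeMaMa-20} on the constrained minimization problem. The inputs I would use from \cite[Theorems 1.2 and 1.3]{FeJeMaMa-20} are: in the mass-subcritical regime $2<p<\max(4,\frac{2(N+5)}{N+1})$ with $p<2+\frac{8}{N}$, the infimum of $\tilde E$ over $S(m)$ is attained for every $m>0$; moreover every minimizer $u\in S(m)$ is, up to a constant complex phase, a ground state solution of \eqref{eq:biharmonic} (in the sense that it attains the least-energy level among nontrivial solutions, equivalently it minimizes the quotient in \eqref{eq:def-kappa-eps}) for a value $\eps=\eps(m)>0$ depending only on $m$ (and on $p,N$), with $\eps(m)\to 0^+$ as $m\to 0^+$.

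The first step is to verify that the $p$-ranges in the statement are precisely the intersection of this mass-subcritical regime with the admissible range $2<p<2_*$ of Theorem~\ref{theorem-nonradial}. For $N=2$ one has $2_*=6$, while $\max(4,\frac{2(N+5)}{N+1})=\frac{14}{3}<6=2+\frac{8}{N}$, so the mass-subcritical regime reduces to $2<p<\frac{14}{3}$, which lies strictly inside $(2,2_*)$; hence the intersection is $2<p<\frac{14}{3}$. For $N\ge 3$ one has $2_*=2+\frac{4}{N-1}\le 4\le\max(4,\frac{2(N+5)}{N+1})$, and the elementary inequality $2_*<2+\frac{8}{N}$ holds for every $N\ge 3$ (it is equivalent to $N>2$), so $2_*$ is the binding constraint and the intersection is $2<p<2_*$. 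Thus, for $p$ in the range of Theorem~\ref{theorem-nonradial-fixed-norm}, both \cite{FeJeMaMa-20} and Theorem~\ref{theorem-nonradial} apply.

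It then remains to combine the two facts. Let $\eps_0=\eps_0(p)>0$ be the threshold furnished by Theorem~\ref{theorem-nonradial}. Since $\eps(m)\to 0^+$ as $m\to 0^+$, I can choose $m_0=m_0(p)>0$ so small that $0<\eps(m)\le\eps_0(p)$ whenever $0<m<m_0$. For such an $m$, let $u\in S(m)$ be an arbitrary minimizer of $\tilde E$ on $S(m)$. By \cite{FeJeMaMa-20}, $u$ is (up to a constant phase) a ground state solution of \eqref{eq:biharmonic} with parameter $\eps=\eps(m)\le\eps_0(p)$; by Theorem~\ref{theorem-nonradial}, $u$ is therefore nonradial. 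As $u$ was arbitrary, all minimizers of $\tilde E$ on $S(m)$ are nonradial, which is the assertion.

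The argument is essentially bookkeeping, so there is no serious obstacle; the one point that needs care is the compatibility of the notion of ``ground state solution'' used here — namely that a normalized minimizer is not merely \emph{a} solution of \eqref{eq:biharmonic} but actually \emph{a ground state} for the corresponding $\eps=\eps(m)$, which is exactly what makes Theorem~\ref{theorem-nonradial} applicable. This is precisely what is recorded in \cite[Theorems 1.2 and 1.3]{FeJeMaMa-20}, so nothing further is required. (If one only knew that normalized minimizers solve \eqref{eq:biharmonic}, one could instead re-run the asymptotic analysis underlying Theorem~\ref{theorem-nonradial} directly along the family $(u_m)$, but this detour is unnecessary here.)
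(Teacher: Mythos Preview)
Your proposal is correct and matches the paper's approach exactly: the paper states that Theorem~\ref{theorem-nonradial-fixed-norm} is an immediate corollary of Theorem~\ref{theorem-nonradial} together with \cite[Theorems 1.2 and 1.3]{FeJeMaMa-20}, and your write-up carries out precisely this combination, including the verification that the stated $p$-ranges are the intersection of the mass-subcritical regime with $(2,2_*)$.
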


In our final main result, we show that the symmetry breaking phenomenon is not restricted to biharmonic equations in the entire space but also arises in the case of Dirichlet problems in the unit ball $B=B_1(0)$. More precisely, we consider the boundary value problem 
\begin{equation}
  \label{eq:biharmonic-bvp-ball}
\left\{  \begin{aligned}
    &\Delta^2 u + 2 a \Delta u +b u - |u|^{p-2}u = 0 &&\qquad \text{in $B$,}\\
    &u= \partial_\nu u =0 &&\qquad \text{on $\partial B$.} 
  \end{aligned}
\right.  
\end{equation}
Related to (\ref{eq:biharmonic-bvp-ball}) we consider the restriction 
$$
u \mapsto \qq_{a,b,B}(u) = \int_{B}\Bigl( |\Delta u|^2- 2 a  |\nabla u|^2 + b |u|^2 \Bigr)\,dx 
$$
of the quadratic form $\qq_{a,b}$ to the subspace $H_0^2(B) \subset H$ and the value 
\begin{equation}
  \label{eq:def-kappa-a-b-ball}
R_{a,b,B}(p):= \inf_{u \in H^2_0(B) \setminus \{0\} } \frac{\qq_{a,b,B}(u)}{ \|u\|_{L^p(B)}^2 }.
\end{equation}
Similarly as in Theorem~\ref{sec:prop-intro}, we see that, for $N \geq 1, a > 0$ $b \ge a^2$ and $2 <p < 2^*$ we have $R_{a,b,B}(p)>0$, and this value is attained in $H^2_0(B) \setminus \{0\}$. Moreover, every minimizer $u \in H^2_0(B) \setminus \{0\}$ corresponds, after multiplication with a positive factor, to a solution of (\ref{eq:biharmonic-bvp-ball}).
We say that a solution $u \in H^2_0(B) \setminus \{0\}$ of (\ref{eq:biharmonic-bvp-ball}) is a ground state solution if the infimum in (\ref{eq:def-kappa-a-b-ball}) is attained at $u$. We then have the following result.

\begin{thm}
  \label{theorem-nonradial-ball}
Let $N \ge 2$ and $2< p< 2_*$, and let $\eps_0= \eps_0(p)>0$ be given by Theorem~\ref{theorem-nonradial}. For $0<\eps \le \eps_0$, there exists $a_0=a_0(\eps,p)>0$ with the property that
every ground state solution $u \in H$ of (\ref{eq:biharmonic-bvp-ball}) is a nonradial function if $a>a_0$ and $b=(1+\eps)a^2$. 
\end{thm}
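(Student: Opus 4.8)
The plan is to prove this by a rescaling argument. A dilation turns the \emph{fixed} ball $B = B_1(0)$ carrying a large parameter $a$ into an \emph{expanding} ball $B_{\sqrt a}(0)$ carrying the fixed-coefficient equation \eqref{eq:biharmonic}; a compactness argument on these expanding balls will then let me invoke Theorem~\ref{theorem-nonradial}. Fix $\eps \in (0,\eps_0(p)]$ throughout. For $u \in H^2_0(B)$ put $v(y) := u(y/\sqrt a)$, so that $v \in H^2_0(B_{\sqrt a}(0))$ and $u \mapsto v$ is a linear bijection. A change of variables, using $b = (1+\eps)a^2$, gives
\begin{equation*}
  \frac{\qq_{a,b,B}(u)}{\|u\|_{L^p(B)}^2}
  = a^{\,2 - \frac N2 + \frac Np}\;\frac{\qq_{\eps,B_{\sqrt a}}(v)}{\|v\|_{L^p(B_{\sqrt a}(0))}^2},
\end{equation*}
where for $R>0$ I write $\qq_{\eps,B_R}(w) := \int_{B_R}\bigl(|\Delta w|^2 - 2|\nabla w|^2 + (1+\eps)|w|^2\bigr)\,dx$ for $w \in H^2_0(B_R)$. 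Since the prefactor is a fixed positive constant, $u$ is a ground state of \eqref{eq:biharmonic-bvp-ball} (i.e.\ minimizes the quotient in \eqref{eq:def-kappa-a-b-ball}) if and only if $v$ minimizes $R_{\eps,B_R}(p) := \inf_{w \in H^2_0(B_R)\setminus\{0\}} \qq_{\eps,B_R}(w)/\|w\|_{L^p(B_R)}^2$ with $R = \sqrt a$, and $u$ is radial if and only if $v$ is radial.

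The next step is to show that $R_{\eps,B_R}(p) \to R_\eps(p)$ as $R \to \infty$. Extending by zero gives inclusions $H^2_0(B_{R_1}) \subset H^2_0(B_{R_2}) \subset H^2(\R^N)$ for $R_1<R_2$, along which $\qq_\eps$ (from \eqref{eq:def-q-eps}) and $\|\cdot\|_p$ are preserved; hence $R \mapsto R_{\eps,B_R}(p)$ is nonincreasing and bounded below by $R_\eps(p)$. On the other hand, $\qq_\eps$ is continuous on $H^2(\R^N)$ because $g_\eps(|\xi|) = (|\xi|^2-1)^2+\eps \le C(\eps)(1+|\xi|^4)$, and $\|\cdot\|_p$ is continuous on $H^2(\R^N)$ by the Sobolev embedding (valid since $p<2^*$); approximating an almost-minimizer of $R_\eps(p)$ by an element of $\bigcup_{R>0}C_c^\infty(B_R) = C_c^\infty(\R^N)$, which is dense in $H^2(\R^N)$, then shows the limit equals $R_\eps(p)$.

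For the main argument I argue by contradiction: suppose the conclusion fails for this $\eps$, so there exist $a_n \to \infty$ and radial ground states $u_n \in H^2_0(B)$ of \eqref{eq:biharmonic-bvp-ball} with $b=(1+\eps)a_n^2$. By the rescaling above, the functions $v_n \in H^2_0(B_{\sqrt{a_n}}(0))$, normalized so that $\|v_n\|_p = 1$, are radial and, after extension by zero to $\R^N$, satisfy $\qq_\eps(v_n) = R_{\eps,B_{\sqrt{a_n}}}(p) \to R_\eps(p)$. Since $\eps>0$ one has $c(\eps)(1+|\xi|^4) \le g_\eps(|\xi|) \le C(\eps)(1+|\xi|^4)$, so $\qq_\eps$ is equivalent to $\|\cdot\|_{H^2(\R^N)}^2$ and $\{v_n\}$ is bounded in $H^2(\R^N)$; passing to a subsequence, $v_n \rightharpoonup v$ in $H^2(\R^N)$ with $v$ radial. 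Because $N \ge 2$ and $2<p<2_*<2^*$, the embedding $H^2_{\mathrm{rad}}(\R^N) \hookrightarrow L^p(\R^N)$ is compact, hence $v_n \to v$ strongly in $L^p(\R^N)$, so $\|v\|_p = 1$ and in particular $v \neq 0$. Weak lower semicontinuity of $\qq_\eps$ gives $\qq_\eps(v) \le \liminf_n\qq_\eps(v_n) = R_\eps(p)$, and combining this with $\|v\|_p=1$ and the definition \eqref{eq:def-kappa-eps} forces $\qq_\eps(v) = R_\eps(p)$. Thus $v$ attains $R_\eps(p)$, so a suitable positive multiple of $v$ is a \emph{radial} ground state solution of \eqref{eq:biharmonic}, contradicting Theorem~\ref{theorem-nonradial} since $\eps \le \eps_0(p)$. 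The resulting contradiction yields, for every $\eps \in (0,\eps_0(p)]$, an $a_0 = a_0(\eps,p)>0$ as claimed.

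The delicate point is the compactness step: one must prevent the rescaled ground states from vanishing or spreading to infinity, so that the weak limit $v$ is a genuine radial whole-space ground state rather than $0$. This is exactly where the radial symmetry of the $u_n$ (hence of the $v_n$) is used, through the compact radial Sobolev embedding, and why the restriction $p < 2_* < 2^*$ is essential. The convergence $R_{\eps,B_R}(p)\to R_\eps(p)$ of the energy levels is comparatively soft, relying only on monotonicity in $R$ and the density of compactly supported functions in $H^2(\R^N)$.
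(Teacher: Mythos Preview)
Your proof is correct, but the compactness step is a detour compared with the paper's argument. Both proofs begin with the same rescaling $u \mapsto u(\cdot/\sqrt a)$, turning the problem on $B$ with parameters $(a,(1+\eps)a^2)$ into the problem on $B_{\sqrt a}$ with the fixed quadratic form $\qq_\eps$, and both prove $R_{\eps,B_R}(p)\to R_\eps(p)$ as $R\to\infty$ by density of $C_c^\infty(\R^N)$ in $H^2(\R^N)$. The difference is in how one rules out radial minimizers on the large balls. The paper simply observes the monotonicity $R^{rad,r}_\eps(p)\ge R^{rad}_\eps(p)$ for all $r>0$ and combines it with the strict gap $R_\eps^{rad}(p)>R_\eps(p)$ established in \eqref{eq:rad-strict-ineq}; since $R_\eps^r(p)\to R_\eps(p)$, this yields $R^{rad,r}_\eps(p)>R^{r}_\eps(p)$ for all large $r$ without any limiting procedure or compactness. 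You instead argue by contradiction, extracting via the compact radial embedding $H^2_{\mathrm{rad}}(\R^N)\hookrightarrow L^p(\R^N)$ a radial whole-space minimizer, which then contradicts Theorem~\ref{theorem-nonradial}. Your route has the mild advantage of invoking only the \emph{statement} of Theorem~\ref{theorem-nonradial} rather than the quantitative inequality \eqref{eq:rad-strict-ineq} from its proof, but at the price of an additional (albeit standard) compactness tool; the paper's argument is shorter and more elementary.
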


Up to our knowledge, this is the first result on nonradiality of ground state solutions for a rotationally invariant semilinear elliptic Dirichlet problem with constant coefficients in a ball. In the case $a=b=0$ and $2< p< 2_*$, the radial symmetry (and uniqueness) of ground state solutions of (\ref{eq:biharmonic-bvp-ball}) has been proven in \cite{Ferrero-Gazzola-Weth}, whereas in the remaining cases the question of radial symmetry remains largely open. Related to this aspect, we mention the analogue in \cite{berchio-gazzola-weth} of the Gidas--Ni--Nirenberg result on Schwarz symmetry of nonnegative solutions for polyharmonic Dirichlet problems in the unit ball with increasing nonlinearity and the counterexamples given in \cite{sweers} and \cite{gazzola-sperone}. For a comprehensive discussion of various aspects of semilinear higher order boundary value problems, see \cite{gazzola-grunau-sweers}.

As already indicated above, the limiting exponent $2_*$ in Theorem~\ref{main-theo-intro-eps-expansion} hints at the Stein--Tomas inequality (see \cite{To-1975,St-1986}), which indeed will be of key importance in our paper. We recall this inequality in the following convenient adjoint version as a Fourier extension estimate. 

\begin{thm}[Stein--Tomas Inequality, Adjoint Version]
\label{stein-tomas}
Suppose $N \geq 2$ and let $S:= S^{N-1}$ be the unit sphere in $\R^N$. If $p \ge 2_*$, then 
\begin{equation}
\label{stein-tomas-quotient}
\CC_{ST}(p):= \inf_{w \in L^2(S) \setminus \{0\}} \frac{\|w\|_{L^2(S)}^2}{\|\check w\|_p^2} >0,
\end{equation}
where, for $w \in L^2(S)$, the function $\check w \in L^p(\R^N)$ is a.\,e. given by  
$$
\check w(x)= (2\pi)^{-N/2} \int_{S}e^{ix \cdot \theta}w(\theta) \, d\sigma(\theta).
$$
Consequently, the inequality $\|\check w\|_p \le \frac{1}{\sqrt{\CC_{ST}(p)}} \|w\|_{L^2(S)}$ holds for every $w \in L^2(S)$.
\end{thm}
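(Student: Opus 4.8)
The statement is the classical Stein--Tomas restriction--extension inequality, and I would reproduce the standard proof, which proceeds via the $TT^{*}$ method followed by interpolation. First I would set up the duality: write $T\colon L^{2}(S)\to L^{p}(\R^{N})$ for the extension operator $Tw=\check w$, so that $\CC_{ST}(p)>0$ is precisely the assertion that $T$ is bounded. Its adjoint $T^{*}\colon L^{p'}(\R^{N})\to L^{2}(S)$ is, up to a constant, the Fourier restriction $f\mapsto\widehat f|_{S}$, and therefore $TT^{*}f=c_{N}\,f*\Phi$ where $\Phi(x)=\int_{S}e^{ix\cdot\theta}\,d\sigma(\theta)$ is a constant multiple of the Fourier transform of the surface measure $d\sigma$ on $S$. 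Since $\|T\|_{L^{2}(S)\to L^{p}}^{2}=\|TT^{*}\|_{L^{p'}\to L^{p}}$, it suffices to establish the convolution estimate $\|f*\Phi\|_{L^{p}(\R^{N})}\le C\|f\|_{L^{p'}(\R^{N})}$ for all Schwartz $f$ and all $p\ge 2_{*}$, and then extend by density; the Fubini manipulations underlying the $TT^{*}$ identity are legitimate on the Schwartz class.

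Two classical facts about $\Phi$ drive the estimate. First, stationary phase (equivalently, Bessel asymptotics) gives the pointwise decay $|\Phi(x)|\le C(1+|x|)^{-\frac{N-1}{2}}$ on $\R^{N}$. Second, $\widehat\Phi=c_{N}\,d\sigma$ is a measure supported on $S$, so convolution with $\Phi$ only becomes a bounded Fourier multiplier after a dyadic localization in space. I would therefore fix a dyadic partition of unity $1=\sum_{j\ge0}\psi_{j}$ with $\psi_{j}$ supported where $|x|\sim 2^{j}$ for $j\ge1$ and $\psi_{0}$ near the origin, set $\Phi_{j}=\psi_{j}\Phi$, and record the building-block bounds $\|f*\Phi_{j}\|_{L^{\infty}}\le C\,2^{-j\frac{N-1}{2}}\|f\|_{L^{1}}$ (which follows from $\|\Phi_{j}\|_{L^{\infty}}\lesssim 2^{-j(N-1)/2}$) and $\|f*\Phi_{j}\|_{L^{2}}\le C\,2^{j}\|f\|_{L^{2}}$ (which follows since $\widehat{\Phi_{j}}=\widehat{\psi_{j}}*d\sigma$ is the surface measure mollified at scale $2^{-j}$, hence has $L^{\infty}$-norm $\lesssim 2^{j}$).

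Interpolating (Riesz--Thorin) between these two bounds yields $\|f*\Phi_{j}\|_{L^{p}}\le C\,2^{j\gamma(p)}\|f\|_{L^{p'}}$ with $\gamma(p)=\tfrac2p-\bigl(1-\tfrac2p\bigr)\tfrac{N-1}{2}$, and a short computation shows $\gamma(p)<0$ precisely when $p>2_{*}$, with $\gamma(2_{*})=0$. Summing the resulting geometric series over $j\ge0$ then proves the convolution estimate, hence the theorem, for every $p>2_{*}$. For the endpoint $p=2_{*}$ the series just fails to converge and one must argue more carefully, using Stein's interpolation theorem for analytic families of operators. Here I would introduce the Bochner--Riesz-type family of tempered distributions $\rho_{z}=\frac{e^{z^{2}}}{\Gamma(z)}(1-|\xi|^{2})_{+}^{\,z-1}$, meromorphically continued in $z\in\C$, where the entire prefactor $e^{z^{2}}/\Gamma(z)$ is chosen to cancel the poles of $\Gamma$ and to supply Gaussian decay in $\Im z$ on vertical lines; one has $\rho_{0}=c_{N}\,d\sigma$. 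Consider $T_{z}f=(\rho_{z}\,\widehat f\,)^{\vee}$, i.e.\ convolution with the inverse Fourier transform $\check\rho_{z}$. On the line $\Re z=1$ the symbol $\rho_{z}$ is bounded, so $T_{z}\colon L^{2}\to L^{2}$ by Plancherel; on the line $\Re z=-\tfrac{N-1}{2}$, Bessel asymptotics give $\check\rho_{z}\in L^{\infty}$, so $T_{z}\colon L^{1}\to L^{\infty}$; in both cases with growth in $\Im z$ admissible for Stein's theorem. Interpolating at the parameter $\theta=\tfrac{N-1}{N+1}$, for which $(1-\theta)\bigl(-\tfrac{N-1}{2}\bigr)+\theta=0$, then yields $T_{0}=c_{N}(\,\cdot\,)*\Phi\colon L^{p'}\to L^{p}$ with $\tfrac1p=\tfrac\theta2=\tfrac{N-1}{2(N+1)}$, that is $p=2_{*}$.

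The one genuinely delicate part is this endpoint. One must verify that $\{\rho_{z}\}$ really interpolates to a constant multiple of $d\sigma$ at $z=0$, that the poles of $\Gamma$ are compensated so that $T_{z}$ is a well-defined analytic family on the closed strip $-\tfrac{N-1}{2}\le\Re z\le1$, and that the $L^{2}\to L^{2}$ and (especially) $L^{1}\to L^{\infty}$ bounds on the two boundary lines hold with the sub-exponential growth in $\Im z$ that Stein's theorem requires --- the last point amounting to a nontrivial oscillatory-integral estimate for $\widehat{(1-|\xi|^{2})_{+}^{\,z-1}}$. Everything else --- the $TT^{*}$ reduction, the stationary-phase decay of $\Phi$, the mollified-measure bound, and the dyadic Riesz--Thorin argument for $p>2_{*}$ --- is routine, which is why this result is usually just quoted from \cite{To-1975,St-1986}.
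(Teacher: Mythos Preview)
Your sketch is the standard proof of the Stein--Tomas inequality and is correct in outline, including the $TT^*$ reduction, the dyadic decomposition and Riesz--Thorin argument for $p>2_*$, and the analytic interpolation via the Bochner--Riesz family for the endpoint $p=2_*$. However, you should be aware that the paper does \emph{not} prove this theorem at all: it is stated as a known result and simply quoted from the references \cite{To-1975,St-1986}. The theorem is used as a black box throughout (in the upper and lower bounds for $R_\eps(p)$ in Sections~3 and~4), so there is nothing in the paper's own argument to compare your proposal against. Your final sentence already anticipates this --- the result is indeed ``usually just quoted'' --- and that is exactly what the authors do here.
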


The existence of optimizers for the Stein--Tomas inequality above in the non-endpoint case when $p> 2_*$ can be inferred from \cite{FaVeVi-2011}. However, existence of optimizers in the endpoint case $p=2_*$ is an open problem except for the cases $N \in \{2,3\}$ (see \cite{ChSh-2012,Sh-2016,Fo-2015}); see also \cite{FrLiSa-2016} for a conditional existence result for the endpoint case in general space dimensions.

The Stein--Tomas inequality plays a key role in the proof of the expansions given in Theorem \ref{main-theo-intro-eps-expansion}. In fact, we obtain the following characterization of the constants $\CC(p)$ occurring in Theorem \ref{main-theo-intro-eps-expansion} in terms of the constants $\CC_{ST}(p)$ in the Stein--Tomas inequality:
\begin{equation}
  \label{eq:characterization-C-p}
\CC(p)= \begin{dcases*} \frac{2}{\pi} \CC_{ST}(p) & if $2_* \le p < 2^*$, \\  \Bigl(\frac{2}{\pi} \CC_{ST}(2_*) \Bigr)^{(N+1)(\frac{1}{2}-\frac{1}{p})} & if $2<p<2_*$. \end{dcases*}  
\end{equation}
While the Stein--Tomas inequality is sufficient to derive the asymptotic expansion (\ref{eq:asymptotic-expansion-r-eps}) in the case where $p \ge 2_*$, we have to combine the Stein--Tomas inequality with interpolation estimates to obtain the lower asymptotic bound (\ref{eq:asymptotic-lower-bound}) in the case $2<p<2_*$. It is somewhat surprising that this approach already yields the optimal exponent, as shown by (\ref{eq:asymptotic-upper-bound}). To obtain the sharp asymptotic upper bound (\ref{eq:asymptotic-upper-bound}), we have to construct suitable nonradial test functions to estimate the quantity $R_\eps(p)$. The construction builts on the well-known test functions used by Knapp to characterize the optimal exponent $2_*$ for the Stein-Tomas inequality, see e.g. \cite[Chapter 7]{wolff}. For the attentive reader, we mention that the numerical factor $\frac{2}{\pi}$ in (\ref{eq:characterization-C-p}) appears due to the second-order derivative of the Fourier symbol $g_{\eps}$ at its minimum; see Lemma \ref{lemma-appendix} below.  

The paper is organized as follows.  In Section~\ref{sec:symm-break-proof}, we first derive Theorem~\ref{theorem-nonradial} from Theorem~\ref{main-theo-intro-eps-expansion} and related asymptotic bounds for radial functions, see Theorem~\ref{main-theo-intro-eps-expansion-rad} below. In Sections~\ref{sec:upper-estim-kapp} and \ref{sec:lower-estim-kapp}, we then complete the proof of Theorem~\ref{main-theo-intro-eps-expansion} by deriving upper and lower asymptotic bounds for the quantity $R_\eps(p)$. Moreover, we prove the radial asymptotic estimates given in Theorem~\ref{main-theo-intro-eps-expansion-rad}. In Section~\ref{sec:dirichl-probl-unit}, we then consider the Dirichlet problem~(\ref{eq:biharmonic-bvp-ball}) and complete the proof of Theorem~\ref{theorem-nonradial-ball}. Finally, in Section~\ref{sec:technical-lemma} we prove an elementary technical lemma which is needed in the proofs of Theorems~\ref{main-theo-intro-eps-expansion} and~\ref{main-theo-intro-eps-expansion-rad}, and in Section~\ref{sec:exist-prop-ground} we prove Theorem~\ref{sec:prop-intro}.

\section{Symmetry Breaking: Proof of Theorem \ref{theorem-nonradial}}
\label{sec:symm-break-proof}

We first prove the symmetry breaking result stated in Theorem \ref{theorem-nonradial}, which will be based on Theorem \ref{main-theo-intro-eps-expansion} and further estimates, whose proofs will be postponed to the sections below. Let $H_{rad}$ denote the closed subspace of radial functions in $H=H^2(\R^N)$. For $2 < p < 2^*$, we define
\begin{equation}
  \label{eq:def-kappa-eps-rad}
R^{rad}_{\eps}(p):= \inf_{u \in H_{rad} \setminus \{0\} } \frac{\qq_\eps(u) }{ \|u\|_p^2 } \:\ge \: R_\eps(p).
\end{equation}
Our goal is to show that, for exponents $2<p < 2_*$, there exists $\eps_0= \eps_0(p)>0$ with the property that
\begin{equation}
  \label{eq:rad-strict-ineq}
R^{rad}_\eps(p) > R_\eps(p) \qquad \text{for $0 < \eps < \eps_0$.}
\end{equation}
Once this is proved, it immediately follows that all ground state solutions of \eqref{eq:rad-strict-ineq} are nonradial for $0< \eps < \eps_0$. The starting point of the proof of \eqref{eq:rad-strict-ineq} is the well-known observation that the range of admissible exponents in the Stein--Tomas inequality can be extended for the subspace of radial functions in $L^2(S)$, which is merely the one-dimensional space of constant functions defined on $S = S^{N-1}$. For this we recall that the function $\check {1_S} \in C^\infty(\R^N)$ is given by
  $$
 \check{1_S} (x) = (2\pi)^{-N/2} \int_{S}e^{ix \cdot \theta}d\sigma(\theta), \qquad x \in \R^N.
  $$
 By standard estimates for oscillatory integrals, we obtain the bound
  $$
  |\check{1_S} (x)| \le C_N(1+|x|)^{-\frac{N-1}{2}} \qquad \text{with some constant $C_N>0$.}
  $$
  As a consequence, we have
  $$
  \check {1_S} \in L^p(\R^N)\qquad \text{for $p > 2_*^{rad}:= \frac{2N}{N-1}$.}
  $$
  We therefore may define
\begin{equation}
\label{stein-tomas-quotient-rad}
\CC_{ST}^{rad}(p):= \frac{\|w_0\|_{L^2(S)}^2}{\|\check w_0\|_p^2} = \frac{\omega_{N-1}}{\|\check w_0\|_p^2},
\end{equation}
where $\omega_{N-1}$ denotes the measure of the unit sphere $S = S^{N-1} \subset \R^N$. Thus, for every radial (i.e., constant) function $w \in L^2(S)$, we have the inequality
$$
\|\check w\|_p \le \frac{1}{\sqrt{\CC_{ST}^{rad}(p)}} \|w\|_{L^2(S)}^2 \quad \text{for $p > 2_*^{rad}$.}
$$
Based on these estimates, we can prove the following asymptotic estimates for $R^{rad}_\eps(p)$.
\begin{thm}
  \label{main-theo-intro-eps-expansion-rad}
  Let $N \ge 2$. We have the following estimates.
  \begin{enumerate}
  \item[(i)] If $2_*^{rad} <p < 2^*$, we have 
   \begin{equation}
        \label{main-theo-intro-eps-expansion-eq-1-rad}
    R^{rad}_\eps(p)= \frac{2 \CC_{ST}^{rad}(p)}{\pi}\sqrt{\eps} + o \bigl(\sqrt{\eps}\bigr) \qquad \text{as $\eps \to 0^+$.}
    \end{equation}
  \item[(ii)] If $2 < p \le 2_*^{rad}$, then for every $\beta > 1- N(\frac{1}{2}-\frac{1}{p})$ there exists a constant $\CC(p,\beta)>0$ with the property that 
   \begin{equation}
        \label{main-theo-intro-eps-expansion-eq-2-rad}
    R^{rad}_\eps(p) \ge \CC(p,\beta)\eps^{\beta} +o\bigl(\eps^{\beta}\bigr) \qquad \text{as $\eps \to 0^+$.}
    \end{equation}
  \end{enumerate}
\end{thm}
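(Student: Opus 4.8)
The plan is to reduce everything to a one‑dimensional problem in the radial variable. For $u\in H_{rad}$ write $\hat u(\xi)=v(|\xi|)$; passing to polar coordinates in Fourier space gives $\qq_\eps(u)=\omega_{N-1}\int_0^\infty g_\eps(r)|v(r)|^2 r^{N-1}\,dr$, while Fourier inversion together with the definition of $\check{1_S}$ yields the representation $u(x)=\int_0^\infty v(r)\,r^{N-1}\,\check{1_S}(rx)\,dr$. Since $g_\eps(r)=(r^2-1)^2+\eps$ attains its small minimum $\eps$ on $|\xi|=1$, with $g_\eps(r)\approx 4(r-1)^2+\eps$ there (note $g_\eps''(1)=8$), near‑minimizers should concentrate the radial profile $v$ on an $r$‑interval of width $\sim\sqrt\eps$ about $r=1$, where $u$ is essentially a multiple of $\check{1_S}$; this is the mechanism that ties $R^{rad}_\eps(p)$ to the radial Stein--Tomas quantity $\CC_{ST}^{rad}(p)=\omega_{N-1}/\|\check{1_S}\|_p^2$. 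The quantitative input is an elementary asymptotic for a one‑dimensional integral (the technical lemma of Section~\ref{sec:technical-lemma}): for $\psi$ bounded, continuous at $r=1$, and with suitable decay, $\int_0^\infty\psi(r)\,g_\eps(r)^{-1}\,dr=\frac{\pi}{2\sqrt\eps}\,\psi(1)+o(\eps^{-1/2})$ as $\eps\to0^+$, the constant $\tfrac\pi2$ being $\int_\R(4t^2+1)^{-1}dt$.

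For the lower bounds: Minkowski's integral inequality applied to the representation of $u$, together with $\|\check{1_S}(r\,\cdot)\|_p=r^{-N/p}\|\check{1_S}\|_p$ and Cauchy--Schwarz, gives
\[
\|u\|_p\le \|\check{1_S}\|_p\int_0^\infty|v(r)|\,r^{N-1-\frac Np}dr \le \|\check{1_S}\|_p\Big(\int_0^\infty g_\eps(r)|v(r)|^2 r^{N-1}dr\Big)^{1/2}\Big(\int_0^\infty \frac{r^{N-1-\frac{2N}p}}{g_\eps(r)}dr\Big)^{1/2}.
\]
The last integral converges precisely for $2<p<2^*$ (at $r=0$ because $p>2$, at $r=\infty$ because $p<2^*$) and, by the technical lemma, equals $\frac{\pi}{2\sqrt\eps}(1+o(1))$. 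Combined with $\qq_\eps(u)=\omega_{N-1}\int g_\eps|v|^2 r^{N-1}$ this yields, for $2_*^{rad}<p<2^*$ so that $\check{1_S}\in L^p$,
\[
\qq_\eps(u)\ge\frac{\omega_{N-1}}{\|\check{1_S}\|_p^2}\cdot\frac{2\sqrt\eps}{\pi}(1-o(1))\|u\|_p^2=\frac{2\,\CC_{ST}^{rad}(p)}{\pi}\sqrt\eps\,(1-o(1))\|u\|_p^2,
\]
which is the lower half of (i). For $2<p\le 2_*^{rad}$, where $\check{1_S}\notin L^p$, I would fix an auxiliary exponent $p_1\in(2_*^{rad},2^*)$, apply the bound above to obtain $\|u\|_{p_1}\le C_{p_1}\eps^{-1/4}\qq_\eps(u)^{1/2}(1+o(1))$, combine it with the trivial estimate $\|u\|_2\le\eps^{-1/2}\qq_\eps(u)^{1/2}$ (from $g_\eps\ge\eps$ and Plancherel), and interpolate: with $\frac1p=\frac{1-\theta}2+\frac\theta{p_1}$ one gets $R^{rad}_\eps(p)\ge c_{p_1}\eps^{1-\theta/2}(1-o(1))$. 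Letting $p_1\downarrow 2_*^{rad}$ drives $1-\theta/2\downarrow 1-N(\frac12-\frac1p)$, so every $\beta$ above that threshold is admissible, which proves (ii).

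For the upper bound in (i) I would take the near‑optimal trial profile $v_\eps(r)=\chi(r)/g_\eps(r)$, with $\chi\in C_c^\infty$ equal to $1$ near $r=1$, and let $u_\eps\in H_{rad}$ be the corresponding radial function. The technical lemma gives $\qq_\eps(u_\eps)=\omega_{N-1}\int\chi^2 g_\eps^{-1}r^{N-1}dr=\frac{\omega_{N-1}\pi}{2\sqrt\eps}(1+o(1))$ and $m_\eps:=\int_0^\infty v_\eps(r)r^{N-1}dr=\frac{\pi}{2\sqrt\eps}(1+o(1))$. Since $v_\eps$ concentrates at scale $\sqrt\eps$ about $r=1$, a first‑order Taylor expansion of $\check{1_S}$ gives $u_\eps(x)=m_\eps\check{1_S}(x)+O(R_0\log(1/\eps))$ uniformly on $B_{R_0}$; choosing $R_0=R_0(\eps)\to\infty$ slowly enough that $R_0^{1+N/p}\sqrt\eps\log(1/\eps)\to0$ and using $\check{1_S}\in L^p$ gives $\|u_\eps\|_p\ge\|u_\eps\|_{L^p(B_{R_0})}\ge m_\eps\|\check{1_S}\|_p(1-o(1))$. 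Hence $R^{rad}_\eps(p)\le\qq_\eps(u_\eps)/\|u_\eps\|_p^2\le\frac{2\,\CC_{ST}^{rad}(p)}{\pi}\sqrt\eps\,(1+o(1))$, matching the lower bound and completing (i).

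The main obstacle is extracting the \emph{sharp} constant $\frac2\pi\CC_{ST}^{rad}(p)$ in (i) rather than only the order $\sqrt\eps$: this rests on the exact asymptotics of the one‑dimensional integral $\int_0^\infty\chi(r)r^\gamma g_\eps(r)^{-1}dr$, with the precise numerical constant governed by $g_\eps''(1)=8$ (Section~\ref{sec:technical-lemma}), and, in the upper‑bound step, on controlling the $L^p$‑mass of $u_\eps$ on an expanding ball so that none of it leaks into the oscillatory region $|x|\gtrsim\eps^{-1/2}$ where the dilates $\check{1_S}(r\,\cdot)$ with $r\approx1$ decorrelate from $\check{1_S}$ — which is exactly why this argument runs only when $\check{1_S}\in L^p$, i.e.\ $p>2_*^{rad}$. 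For (ii) the sole delicate point is that the radial Stein--Tomas inequality genuinely fails at its endpoint $p=2_*^{rad}$ (as $\check{1_S}\notin L^{2_*^{rad}}$), so the exponent $1-N(\frac12-\frac1p)$ can be approached but not attained.
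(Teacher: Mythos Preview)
Your proposal is correct and follows essentially the same strategy as the paper (Propositions~\ref{prop:upper-estim-kapp-large} and~\ref{prop:lower-estim-kapp}): Minkowski plus Cauchy--Schwarz together with the one-dimensional asymptotic of Lemma~\ref{lemma-appendix} for the lower bounds, a radial trial profile $v_\eps\propto 1/g_\eps$ concentrated near $r=1$ for the upper bound, and $L^2$--$L^{p_1}$ interpolation (with $p_1\downarrow 2_*^{rad}$) for part~(ii). The only noteworthy difference is that the paper obtains the sharp $L^p$ lower bound for the trial function more cleanly via Fatou's lemma (from the pointwise convergence $u_\eps/m_\eps\to\check{1_S}$) in place of your Taylor expansion on expanding balls, and localizes the lower-bound computation to a fixed annulus $A_\delta$ before applying Cauchy--Schwarz rather than integrating over all of $(0,\infty)$ as you do.
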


\begin{proof}
See Sections  \ref{sec:upper-estim-kapp} and  \ref{sec:lower-estim-kapp} below.
\end{proof}

For $N \ge 2$ and $2 < p < 2_*$, the key strict inequality \eqref{eq:rad-strict-ineq} now follows by combining Theorem \ref{main-theo-intro-eps-expansion-rad} with the estimates in Theorem~\ref{main-theo-intro-eps-expansion}, since we have
$$
\frac{3}{4}+ \frac{1}{2p} -\frac{N}{2}(\frac{1}{2}-\frac{1}{p}) >
\left\{
  \begin{aligned}
    &\frac{1}{2} &&\qquad \text{for $2_*^{rad} <p < 2_*$},\\
    &1- N(\frac{1}{2}-\frac{1}{p})&&\qquad \text{for $2< p \le 2_*^{rad}$.}
  \end{aligned}
\right.
$$
This proves the that strict inequality \eqref{eq:rad-strict-ineq} holds for some $\eps_0=\eps_0(p) > 0$. This completes the proof of Theorem~\ref{theorem-nonradial}.  \hfill $\square$

\section{Upper estimates for $R_\eps(p)$ and $R_\eps^{rad}(p)$}
\label{sec:upper-estim-kapp}

In this section, we prove the upper estimates for $R_\eps(p)$ and $R_\eps^{rad}(p)$ needed in the proofs of Theorems \ref{main-theo-intro-eps-expansion} and \ref{main-theo-intro-eps-expansion-rad}. We begin with the following result.

\begin{proposition}
\label{prop:upper-estim-kapp-large}  
For $2_* \le p < 2^*$, we have 
$$
R_\eps(p) \le  \frac{2 \CC_{ST}(p)}{\pi} \sqrt{\eps} + o(\sqrt{\eps}) \qquad \text{as $\eps \to 0^+$},  
$$
$$
R_\eps^{rad}(p) \leq \frac{2 \CC_{ST}^{rad}(p)}{\pi} \sqrt{\eps} + o(\sqrt{\eps}) \qquad \text{as $\eps \to 0^+$}.
$$
\end{proposition}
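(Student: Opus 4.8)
The plan is to test the Rayleigh quotient defining $R_\eps(p)$ against functions whose Fourier transform concentrates on a thin shell around the unit sphere $S=S^{N-1}$, where the symbol $g_\eps(|\xi|)=(|\xi|^2-1)^2+\eps$ attains its minimum value $\eps$. Given $w\in L^2(S)\setminus\{0\}$, I would set
\[
\widehat{u_\eps}(\xi)=\phi_\eps(|\xi|)\,w\!\left(\tfrac{\xi}{|\xi|}\right),\qquad \phi_\eps(r)=\bigl(4(r-1)^2+\eps\bigr)^{-1}\,\mathbf 1_{\{|r-1|<\rho\}}(r),
\]
where $\rho=\rho(\eps)$ is an intermediate scale with $\sqrt\eps\ll\rho\ll1$; the choice $\rho=\eps^{1/4}$ will do. Since $\widehat{u_\eps}$ is bounded with compact support and $w\in L^2(S)$, one checks $u_\eps\in H^2(\R^N)\setminus\{0\}$. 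The profile $(4t^2+\eps)^{-1}$ is forced by the following elementary one-dimensional fact, which also explains the constant $2/\pi$: by Cauchy--Schwarz,
\[
\Bigl|\int_\R h(t)\,dt\Bigr|^2\le\Bigl(\int_\R(4t^2+\eps)|h(t)|^2\,dt\Bigr)\Bigl(\int_\R\frac{dt}{4t^2+\eps}\Bigr),
\]
so that $\int_\R(4t^2+\eps)|h|^2\,dt\big/\bigl|\int_\R h\,dt\bigr|^2\ge\bigl(\int_\R(4t^2+\eps)^{-1}dt\bigr)^{-1}=\tfrac{2}{\pi}\sqrt\eps$, with equality when $h\propto(4t^2+\eps)^{-1}$.

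I would first compute the numerator: by Plancherel and polar coordinates,
\[
\qq_\eps(u_\eps)=\int_{\R^N}g_\eps(|\xi|)\,|\widehat{u_\eps}(\xi)|^2\,d\xi=\|w\|_{L^2(S)}^2\int_0^\infty g_\eps(r)\,|\phi_\eps(r)|^2\,r^{N-1}\,dr .
\]
On $\{|r-1|<\rho\}$ one has $g_\eps(r)=(4(r-1)^2+\eps)(1+O(\rho))$ and $r^{N-1}=1+O(\rho)$, so the radial integral equals $\tfrac{\pi}{2\sqrt\eps}(1+O(\rho)+O(\sqrt\eps/\rho))=\tfrac{\pi}{2\sqrt\eps}(1+o(1))$, and likewise $\int_0^\infty\phi_\eps(r)r^{N-1}\,dr=\tfrac{\pi}{2\sqrt\eps}(1+o(1))$. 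For the denominator, the definition of $\check w$ and Fubini give the exact identity
\[
u_\eps(x)=\Bigl(\int_0^\infty\phi_\eps(r)r^{N-1}\,dr\Bigr)\,v_\eps(x),\qquad v_\eps(x)=\int_0^\infty\tilde\phi_\eps(r)\,r^{N-1}\,\check w(rx)\,dr,
\]
where $\tilde\phi_\eps$ is $\phi_\eps$ renormalized so that $\tilde\phi_\eps(r)\,r^{N-1}\,dr$ is a probability measure on $\{|r-1|<\rho\}$. By Minkowski's integral inequality and the $L^p$-continuity of the dilations $r\mapsto\check w(r\,\cdot\,)$ — which needs only $\check w\in L^p(\R^N)$, precisely what the Stein--Tomas inequality (Theorem~\ref{stein-tomas}) supplies for $p\ge2_*$ — one obtains $v_\eps\to\check w$ in $L^p(\R^N)$, hence $\|v_\eps\|_p\to\|\check w\|_p>0$.

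Combining the two estimates,
\[
R_\eps(p)\le\frac{\qq_\eps(u_\eps)}{\|u_\eps\|_p^2}=\frac{2}{\pi}\cdot\frac{\|w\|_{L^2(S)}^2}{\|\check w\|_p^2}\,\sqrt\eps\,(1+o(1))\qquad\text{as $\eps\to0^+$.}
\]
Since $w\in L^2(S)\setminus\{0\}$ is arbitrary, taking the infimum over $w$ and recalling $\CC_{ST}(p)=\inf_w\|w\|_{L^2(S)}^2/\|\check w\|_p^2$ gives $\limsup_{\eps\to0^+}\eps^{-1/2}R_\eps(p)\le\tfrac{2}{\pi}\CC_{ST}(p)$, which is the first assertion. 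For the radial bound I would run the identical argument with $w=1_S$: then $\widehat{u_\eps}$ is radial, so $u_\eps\in H_{rad}$, and $\|1_S\|_{L^2(S)}^2/\|\check{1_S}\|_p^2=\CC_{ST}^{rad}(p)$ by definition of the latter, yielding $R_\eps^{rad}(p)\le\tfrac{2}{\pi}\CC_{ST}^{rad}(p)\sqrt\eps+o(\sqrt\eps)$.

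The main obstacle is controlling the denominator — showing that the renormalized trial function $v_\eps$ does not degenerate but converges in $L^p$ to the Fourier extension $\check w$. This is exactly where the hypothesis $p\ge2_*$ enters, through the Stein--Tomas estimate $\check w\in L^p(\R^N)$; at the endpoint $p=2_*$ this is a genuine endpoint extension inequality, and for $2_*<p<2^*$ it is the standard Stein--Tomas range. Everything else is routine bookkeeping: one must choose the intermediate scale $\rho=\rho(\eps)$ so that the curvature correction in $g_\eps$, the Jacobian $r^{N-1}$, and the truncation to the shell each perturb the leading $\tfrac{\pi}{2\sqrt\eps}$-asymptotics of the two radial integrals only by a relative $o(1)$, and $\rho=\eps^{1/4}$ achieves this.
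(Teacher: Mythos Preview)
Your proof is correct and follows essentially the same strategy as the paper: build test functions whose Fourier transform is $w(\xi/|\xi|)$ times a radial profile $\sim g_\eps^{-1}$ supported on a shell of width $\eps^s$ with $s\in(0,\tfrac12)$, compute the numerator directly, and show the normalized trial function recovers $\check w$ in the denominator. The paper uses the exact profile $1/g_\eps(r)$ rather than your quadratic approximation $(4(r-1)^2+\eps)^{-1}$ and controls the denominator via pointwise convergence plus Fatou's lemma instead of your Minkowski/dilation-continuity argument, and it treats the endpoint $p=2_*$ by choosing approximate optimizers rather than taking the infimum over $w$ at the end as you do; these are minor execution differences that do not change the substance of the argument.
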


\begin{proof}
Recall that $S = S^{N-1} \subset \R^N$ denotes the unit sphere. Assume first that $p  > 2_*$ and let $w \in L^2(S)$ be an extremal function for the adjoint Stein--Tomas inequality, i.\,e., 
$$
\|w\|_{L^2(S)}^2 = \CC_{ST}(p) \|\check w\|_{p}^2,
$$
where $\check w$ is given by 
$$
\check w(x)= \int_{S}e^{ix \theta} w(\theta) \, d \sigma(\theta).
$$
We then fix $s \in (0,\frac{1}{2})$, put 
$$
\rho_\eps := \int_{1-\eps^s}^{1+\eps^s} \frac{r^{N-1} \, dr}{g_{\eps} (r)} \qquad \text{for $\eps \in (0,1)$}
$$
and we define $u_\eps \in H$ by its Fourier transform
$$
\hat u_\eps(\xi):= \left \{
  \begin{aligned}
&\frac{1}{g_\eps(|\xi|)} w(\frac{\xi}{|\xi|}) &&\qquad \mbox{if $\bigl||\xi|-1\bigr| \le \eps^s$},\\
&0 && \qquad \mbox{if $ \bigl||\xi|-1\bigr| \ge \eps^s$}.
\end{aligned}
\right.
$$
Then we have 
\begin{align*}
\qq_\eps(u_\eps)&= \int_{\R^N} g_\eps(|\xi|) |\hat u_\eps(\xi)|^2d \xi= 
\int_{1-\eps^s}^{1+\eps^s} \frac{r^{N-1}}{g_\eps(r)}  \int_{S} |w(\theta)|^2 \, d \sigma( \theta) \, dr \\
&= \rho_\eps \int_{S} |w(\theta)|^2 \, d\sigma(\theta) =\rho_\eps \CC_{ST}(p) \|\check w\|_{p}^2. 
\end{align*}
Moreover, for $x \in \R^N$ we have 
\begin{align*}
u_\eps(x) = (2\pi)^{-N/2} \int_{\R^N} e^{ix \xi} \hat u_\eps(\xi) \, d\xi &= (2\pi)^{-N/2} \int_{1-\eps^s}^{1+\eps^s} \frac{r^{N-1}}{g_\eps(r)} \int_{S} 
e^{i r x \xi} w(\theta) d\sigma(\theta) \, dr\\
&= \int_{1-\eps^s}^{1+\eps^s} \frac{r^{N-1}}{g_\eps(r)} \check w(rx) \, dr.
\end{align*}
Therefore,
$$
|\frac{u_\eps(x)}{\rho_\eps} - \check w(x)| = \frac{1}{\rho_\eps} \int_{1-\eps^s}^{1+\eps^s} \frac{r^{N-1}}{g_\eps(r)} \Bigl[\check w(rx)-\check w(x)\Bigr] dr \le  \sup_{|r-1|\le \eps^s} |\check w(rx)-\check w(x)| \to 0 
$$
as $\eps \to 0^+$. Hence Fatou's Lemma yields 
$$
\liminf_{\eps \to 0^+} \left \|\frac{u_\eps(x)}{\rho_\eps} \right \|_p \ge \|\check w\|_p .
$$
Consequently, 
$$
R_\eps(p) \le \frac{\qq_\eps(u_\eps)}{\|u_\eps\|_p^2} \le \frac{\rho_\eps \CC_{ST}(p) \|\check w\|_{p}^2}{\rho_\eps^2 \bigl(\|\check w\|_{p}^2+ o(1)\bigr)}
\le \frac{\CC_{ST}(p)}{\rho_\eps} (1+o(1)) 
$$ 
as $\eps \to 0^+$. Moreover, since
$$
\rho_\eps = \bigl(1 + o(1)\bigr)\int_{1-\eps^s}^{1+\eps^s} \frac{dr}{g_{\eps}(r)} \qquad \text{as $\eps \to 0^+$,}
$$
it follows from Lemma~\ref{lemma-appendix} and Remark~\ref{remark-appendix} in Appendix A below that we have
$$
\sqrt{\eps}\rho_\eps \to \frac{\pi}{\sqrt{g_0''(1)}} = \frac{\pi}{2} \qquad \text{as $\eps \to 0^+$}.
$$
Thus we conclude
$$
\limsup_{\eps \to 0^+} \frac{R_\eps(p)}{\sqrt{\eps}} \le \CC_{ST}(p)
\lim_{\eps \to 0^+}\frac{1}{\sqrt{\eps} \rho_\eps}= \frac{2 \CC_{ST}(p)}{\pi}. 
$$
This proves the claimed upper bound for $R_\eps(p)$ in the case $2_* < p < 2^*$. In the endpoint case when $p=2_*$ (and the existence of optimizers $w \in L^2(S)$ for the Stein--Tomas inequality in dimensions $N \geq 4$ is still open), we can choose for any $\delta > 0$ an approximate optimizer $w \in L^2(S)$ such that $\| w \|_{L^2(S)}^2 = (\CC_{ST}(p_*) + \delta) \| \check{w} \|_p^2$. By the exact reasoning as above, we find that 
$$
R_\eps(p_*) \leq \frac{2 (\CC_{ST}(p_*) + \delta)}{\pi} \sqrt{\eps} + o(\eps).
$$
Since $\delta > 0$ can be chosen arbitrarily, we are done.

Finally, we remarks that the upper estimate for $R_\eps^{rad}(p)$ with $2_* \leq p < 2^*$ again follows by the above arguments if we take the constant function $w \equiv 1 \in L^2(S)$ on the unit sphere.
\end{proof}

Next, we treat the case $p > 2_*$.

\begin{proposition}
\label{prop:upper-estim-kapp}  
For $2 < p < 2_*$, we have 
$$
R_\eps(p) = O\bigl(\eps^{\frac{3}{4}+ \frac{1}{2p} -\frac{N}{2}(\frac{1}{2}-\frac{1}{p})}\bigr) \qquad \text{as $\eps \to 0^+$.}  
$$
\end{proposition}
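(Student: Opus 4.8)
The plan is to construct an explicit family of nonradial \emph{Knapp} test functions $u_\eps \in H$ concentrated on a flat box tangent to the unit sphere $S = S^{N-1}$ and to insert them into the variational characterization \eqref{eq:def-kappa-eps}. The guiding principle is that the symbol $g_\eps(|\xi|) = (|\xi|^2-1)^2 + \eps$ is comparable to $\eps$ precisely on the radial neighborhood $\{\,\bigl||\xi|-1\bigr| \lesssim \sqrt\eps\,\}$ of $S$; a rectangular box tangent to $S$ fits inside this neighborhood exactly when its tangential side length $\delta$ satisfies $\delta^2 \lesssim \sqrt\eps$, which singles out $\delta = \eps^{1/4}$ as the critical Knapp scale. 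Concretely, after fixing the point $e_N \in S$, I would take
$$
R_\eps := \bigl\{\, \xi = (\xi',\xi_N) \in \R^{N-1}\times\R \;:\; |\xi_j| \le \eps^{1/4}\ (1\le j \le N-1),\ |\xi_N - 1| \le \sqrt\eps \,\bigr\}
$$
and define $u_\eps \in H$ by $\widehat{u_\eps} := \mathbf 1_{R_\eps}$. Since $\widehat{u_\eps}$ is bounded with compact support, $u_\eps \in H^2(\R^N) \setminus \{0\}$, and $R_\eps$ is genuinely flat (not rotationally invariant).

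First I would verify that $g_\eps \le C_N \eps$ on $R_\eps$: for $\xi \in R_\eps$ one has $\bigl||\xi|^2 - 1\bigr| = \bigl||\xi'|^2 + \xi_N^2 - 1\bigr| \le (N-1)\sqrt\eps + \bigl|\xi_N^2 - 1\bigr| \le C_N \sqrt\eps$ for $\eps$ small, whence $g_\eps(|\xi|) = (|\xi|^2-1)^2 + \eps \le C_N^2\,\eps + \eps \le C_N'\,\eps$. Combined with $\int_{\R^N} |\widehat{u_\eps}|^2 = |R_\eps| = 2^N \eps^{(N-1)/4 + 1/2} = 2^N \eps^{(N+1)/4}$, the Fourier-side formula \eqref{eq:def-q-eps} yields the numerator bound $\qq_\eps(u_\eps) = \int_{\R^N} g_\eps(|\xi|)\,|\widehat{u_\eps}(\xi)|^2\, d\xi \le C\,\eps\cdot\eps^{(N+1)/4} = C\,\eps^{(N+5)/4}$.

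The remaining, mildly delicate, step is a matching \emph{lower} bound for $\|u_\eps\|_p$, where one must check that the oscillation of $u_\eps = (\mathbf 1_{R_\eps})^\vee$ does not destroy the $L^p$-mass. Here the product structure of the box does the work: writing $\xi_N = 1+t$ gives $u_\eps(x) = (2\pi)^{-N/2}e^{ix_N}\bigl(\prod_{j=1}^{N-1}\tfrac{2\sin(\eps^{1/4}x_j)}{x_j}\bigr)\tfrac{2\sin(\sqrt\eps\,x_N)}{x_N}$, and rescaling each one-dimensional Dirichlet kernel (substitute $x_j \mapsto z_j\eps^{-1/4}$, $x_N \mapsto z_N\eps^{-1/2}$, using $\int_\R \bigl|\tfrac{\sin z}{z}\bigr|^p dz < \infty$ since $p>1$) yields $\|u_\eps\|_p^p = c_{N,p}\,\eps^{(p-1)(N-1)/4}\,\eps^{(p-1)/2} = c_{N,p}\,\eps^{(p-1)(N+1)/4}$ with $c_{N,p}>0$. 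Equivalently, and perhaps more transparently, one can bound $|u_\eps| \ge c_N\,|R_\eps|$ on the dual box $\{|x_j| \le c_N\eps^{-1/4},\ |x_N| \le c_N\eps^{-1/2}\}$, of measure $\sim \eps^{-(N+1)/4}$, to get $\|u_\eps\|_p^p \gtrsim |R_\eps|^p\,\eps^{-(N+1)/4}$ — the same power of $\eps$. Combining the two estimates,
$$
R_\eps(p) \le \frac{\qq_\eps(u_\eps)}{\|u_\eps\|_p^2} \le C\,\eps^{\frac{N+5}{4} - \frac{(p-1)(N+1)}{2p}} = C\,\eps^{\frac{3-N}{4} + \frac{N+1}{2p}} = C\,\eps^{\frac{3}{4} + \frac{1}{2p} - \frac{N}{2}(\frac{1}{2} - \frac{1}{p})},
$$
which is the claimed bound. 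The only point that really needs care is the choice of the two scales ($\eps^{1/4}$ tangentially, $\eps^{1/2}$ normally) — this is exactly the Knapp configuration optimized against the geometry of $\{g_\eps \lesssim \eps\}$ — together with the check that $(\mathbf 1_{R_\eps})^\vee$ retains a definite fraction of its $L^p$-mass; neither is a genuine obstacle once the exponent bookkeeping is done. If one prefers to avoid characteristic functions, one may equally take $\widehat{u_\eps}$ to be a smooth bump adapted to $R_\eps$, with identical conclusions.
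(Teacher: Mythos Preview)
Your proof is correct and shares the same core idea as the paper's --- a Knapp-type test function with tangential scale $\eps^{1/4}$ and normal scale $\eps^{1/2}$ --- but the implementation differs in a useful way. The paper takes $\widehat{u_\eps}$ to be the indicator of an \emph{annular sector}, namely $\{\,\bigl||\xi|-1\bigr|\le\sqrt\eps\,\}$ intersected with the cone over the spherical cap $C_\eps=\{\theta\in S: 1-\theta_N\le\sqrt\eps\}$; because this region is curved, $u_\eps$ does not factor, and the paper spends most of the proof controlling real and imaginary parts of $e^{-ix_N}\check w_\eps(rx)$ to show $|u_\eps|\gtrsim |C_\eps|\sqrt\eps$ on a dual box. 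Your flat box $R_\eps$ is contained in the same annulus (this is your check that $g_\eps\le C_N\eps$ on $R_\eps$) and has the same volume $\sim\eps^{(N+1)/4}$, but its product structure lets you compute $\|u_\eps\|_p$ \emph{exactly} via one-dimensional Dirichlet kernels and a rescaling, bypassing the oscillatory-integral bookkeeping entirely. The trade-off is that the paper's sector construction connects more directly to the Fourier extension picture $\check w_\eps$ used elsewhere (Proposition~\ref{prop:upper-estim-kapp-large}), while your version is shorter and more self-contained for this particular estimate.
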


\begin{proof}
  Inspired by Knapp's well known example (see e.g. \cite[Chapter 7]{wolff}), we construct test functions by using characteristic functions of spherical caps. Let $\eps  \in (0,1)$ in the following. As usual, we use $S=S^{N-1}$ to denote unit sphere in $\R^N$. We define the spherical cap
  \begin{equation}
    \label{eq:def-c-eps}
  C_\eps = \{\theta \in S\::\: 1 - \theta_N \le \eps^{\frac{1}{2}}\}= \{\theta \in S\::\: |\theta-e_N| \le \sqrt{2} \eps^{\frac{1}{4}}\},
  \end{equation}
  where the latter equality follows since $|\theta- e_N|^2 = 2(1- \theta \cdot e_N) = 2(1 - \theta_N)$ for $\theta \in S$.
  We note that 
  \begin{equation}
    \label{eq:theta-i-est}
  |\theta_i| \le |\theta-e_N| \le \sqrt{2}\eps^{\frac{1}{4}} \qquad \text{for $\theta \in C_\eps$, $i=1,\dots,N-1$,}
  \end{equation}
  and that 
  $$
  \|w_\eps\|_{L^2(S)}^2 =  |C_\eps|  \qquad \text{for $w_\eps := 1_{C_\eps} \in L^2(S)$.}
  $$
In the following, we shall estimate $R_\eps(p)$ with the test function $u_\eps \in H$ defined by 
$$
\hat u_\eps(\xi):= \left \{
  \begin{aligned}
&w_\eps(\frac{\xi}{|\xi|}) &&\qquad \text{if $\bigl||\xi|-1\bigr| \le \sqrt{\eps}$,}\\
&0 &&\qquad \text{if $\bigl||\xi|-1\bigr| \ge \sqrt{\eps}$.}   
\end{aligned}
\right.
$$
Since $0 \le g_\eps(r) \le C \eps$ for $|r-1| \le \sqrt{\eps}$ with a constant $C>0$, we have 
\begin{align}
\qq_\eps(u_\eps)&= \int_{\R^N} g_\eps(|\xi|) |\hat u_\eps(\xi)|^2d \xi \le C \eps  
           \int_{1-\sqrt{\eps}}^{1+\sqrt{\eps}} r^{N-1}  \int_{S} |w_\eps(\theta)|^2d\sigma(\theta) dr \nonumber\\
  &\le 2C \eps^{\frac{3}{2}}(1+\sqrt{\eps})^{N-1}\|w_\eps\|_{L^2(S)}^2 \le 2^N C|C_\eps|\eps^{\frac{3}{2}}.\label{q-eps-upper-est}
\end{align}
To estimate $\|u_\eps\|_p$, we now define, for $\delta>0$, the set 
$$
M_{\eps,\delta}:= \{x \in \R^N\::\: |x_N| \le \delta \eps^{-\frac{1}{2}},\; |x_i|\le \delta \eps^{-\frac{1}{4}}\: \text{for $i=1,\dots,N-1$.} \},
$$
which has the volume 
$$
|M_{\eps,\delta}| = \delta^N \eps^{-\frac{N+1}{4}}.
$$
We also note that 
\begin{equation}
  \label{eq:real-part-1}
{\textbf{Re}\,} \Bigl(e^{-i x_N} \check w_\eps (x)\Bigr) = (2\pi)^{-\frac{N}{2}} \int_{C_\eps} {\textbf{Re}\,} \bigl( e^{i x \cdot (\theta- e_N)}\bigr) d \sigma(\theta)
= (2\pi)^{-\frac{N}{2}}  \int_{C_\eps} \cos \bigl( x \cdot (\theta- e_N)\bigr) d \sigma(\theta), 
\end{equation}
whereas, by (\ref{eq:def-c-eps}) and (\ref{eq:theta-i-est}),
$$
x \cdot (\theta- e_N) = \sum_{i=1}^{N-1}x_i \theta_i + x_N (\theta_N- 1) \le [(N-1)\sqrt{2} + 1]\delta \le 2N \delta \qquad \text{for $x \in M_{\eps,\delta}$, $\theta \in C_\eps$.}
$$
Hence, setting $\delta_0 := \frac{\pi}{8N}$, we have
$$
|x \cdot (\theta- e_N)| \le \frac{\pi}{4} \qquad \text{for $x \in M_{\delta_0,\eps}$, $\theta \in C_\eps$,}
$$
By (\ref{eq:real-part-1}), we deduce that 
\begin{equation}
  \label{eq:real-lower-bound}
{\textbf{Re}\,} \Bigl(e^{-i x_N} \check w_\eps (x)\Bigr) \ge  (2\pi)^{-\frac{N}{2}} |C_\eps| \cos \frac{\pi}{4} = \frac{|C_\eps|}{\sqrt{2}(2\pi)^{\frac{N}{2}}} \qquad \text{for $x \in M_{\eps,\delta_0}$.}
\end{equation}
Similarly, we also compute that
\begin{align}
  \Bigl|{\textbf{Im}\,} \Bigl(e^{-i x_N} \check w_\eps (x)\Bigr)\Bigr| &= (2\pi)^{-\frac{N}{2}}  \Bigl| \int_{C_\eps} {\textbf{Im}\,} \bigl(e^{i x \cdot (\theta- e_N)}\bigr) d \sigma(\theta)\Bigr| \le (2\pi)^{-\frac{N}{2}} |C_\eps| \sin \frac{\pi}{4} \nonumber\\
 &=  \frac{|C_\eps|}{\sqrt{2}(2\pi)^{\frac{N}{2}}} \qquad \text{for $x \in M_{\eps,\delta_0}$.}\label{Im-modulus-bound}
\end{align}
For $x \in \R^N$ we now have 
$$
u_\eps(x) =  \int_{\R^N} e^{ix \xi} \hat u_\eps(\xi) d\xi = (2\pi)^{-N/2} \int_{1-\sqrt{\eps}}^{1+\sqrt{\eps}}r^{N-1} \int_{S} 
e^{i r x \theta} w_\eps(\theta) d\sigma(\theta) = \int_{1-\sqrt{\eps}}^{1+\sqrt{\eps}} r^{N-1}\check w_\eps(rx)dr
$$
and therefore
\begin{align}
  &{\textbf{Re}\,} \Bigl(e^{-i x_N} u_\eps(x)\Bigr) = \int_{1-\sqrt{\eps}}^{1+\sqrt{\eps}} r^{N-1} {\textbf{Re}\,} \Bigl(e^{-i x_N} \check w_\eps(rx)\Bigr)dr \nonumber\\
                                       &= \int_{1-\sqrt{\eps}}^{1+\sqrt{\eps}} r^{N-1} \Bigl[ {\textbf{Re}\,} \bigl(e^{-i r x_N} \check w_\eps(rx)\bigr){\textbf{Re}\,} e^{i(r-1)x_N} - {\textbf{Im}\,} \bigl(e^{-i r x_N} \check w_\eps(rx)\bigr){\textbf{Im}\,} e^{i(r-1)x_N}\Bigr)dr \nonumber\\
   &= \int_{1-\sqrt{\eps}}^{1+\sqrt{\eps}} r^{N-1} \Bigl[ {\textbf{Re}\,} \bigl(e^{-i r x_N} \check w_\eps(rx)\bigr)\cos\bigl((r-1)x_N\bigr) - {\textbf{Im}\,} \bigl(e^{-i r x_N} \check w_\eps(rx)\bigr)\sin\bigl((r-1)x_N\bigr)\Bigr)dr \label{real-part-product}
\end{align}
Here we note that 
$$
|r-1| |x_N| \le \delta_0  \le \frac{\pi}{6} \qquad \text{for $|r-1|\le \sqrt{\eps}$, $x \in M_{\eps,\delta_0}$.}
$$
We thus find that 
\begin{equation}
  \label{eq:1-4-3-4-est}
\cos\bigl((r-1)x_N\bigr) \ge \frac{\sqrt{3}}{2}\quad \text{and}\quad \bigl|\sin\bigl((r-1)x_N\bigr)\bigr| \le \frac{1}{2} \qquad \text{for $|r-1|\le \sqrt{\eps}$, $x \in M_{\eps,\delta_0}$.}
\end{equation}
We now fix $\delta:= \frac{\delta_0}{2}$. By (\ref{eq:real-lower-bound}) and (\ref{Im-modulus-bound}), we have the implications
\begin{align*}
  &x  \in M_{\eps,\delta} \qquad \Longrightarrow \qquad r x \in M_{\eps,\delta_0} \quad \text{for $r \in (0,1+ \sqrt{\eps})$}\qquad \Longrightarrow\\
  &{\textbf{Re}\,} \Bigl(e^{-i x_N} \check w_\eps (rx)\Bigr) \ge  \frac{|C_\eps|}{\sqrt{2}(2\pi)^{\frac{N}{2}}}\quad \text{and}\quad
    {\textbf{Im}\,} \Bigl(e^{-i x_N} \check w_\eps (rx)\Bigr) \le \frac{|C_\eps|}{\sqrt{2}(2\pi)^{\frac{N}{2}}}\qquad \text{for $r \in (0,1+ \sqrt{\eps})$.}
\end{align*}
Combining these estimates with (\ref{real-part-product}) and (\ref{eq:1-4-3-4-est}), we see that
$$
  {\textbf{Re}\,} \Bigl(e^{-i x_N} u_\eps(x)\Bigr) \ge  \int_{1-\sqrt{\eps}}^{1+\sqrt{\eps}} r^{N-1} \frac{|C_\eps|}{\sqrt{2}(2\pi)^{\frac{N}{2}}}\frac{\sqrt{3}-1}{2}dr
\ge  \frac{(\sqrt{3}-1) (1-\sqrt{\eps})^{N-1} \sqrt{\eps}|C_\eps|}{\sqrt{2}(2\pi)^{\frac{N}{2}}} 
$$
for $x \in M_{\eps,\delta}$. This also implies that
$$
\Bigl|u_\eps(x)\Bigr| \ge  \frac{(\sqrt{3}-1) (1-\sqrt{\eps})^{N-1} \sqrt{\eps}|C_\eps|}{\sqrt{2}(2\pi)^{\frac{N}{2}}} \qquad \text{for $x \in M_{\eps,\delta}$.}
$$
From this we deduce that
\begin{equation}
  \label{eq:lower-bound-p-norm-final}
\|u_\eps\|_p \ge \frac{(\sqrt{3}-1)(1-\sqrt{\eps})^{N-1}  \sqrt{\eps}|C_\eps|}{\sqrt{2}(2\pi)^{\frac{N}{2}}} |M_\eps|^{1/p} = \frac{(\sqrt{3}-1)(1-\sqrt{\eps})^{N-1} \eps^{\frac{1}{2}-\frac{N+1}{4p}}|C_\eps|}{\sqrt{2}(2\pi)^{\frac{N}{2}}}.
\end{equation}
By combining (\ref{q-eps-upper-est}) and (\ref{eq:lower-bound-p-norm-final}), we obtain 
$$
R_\eps(p)  \le \frac{\qq(u_\eps)}{\|u_\eps\|_p^2} \le  \frac{2^{N+1}(2\pi)^{N}C|C_\eps|\eps^{\frac{3}{2}}}{(\sqrt{3}-1)^2(1-\sqrt{\eps})^{2(N-1)} \eps^{1-\frac{N+1}{2p}}|C_\eps|^2}  = \frac{2^{N+1}(2\pi)^{N}C}{(\sqrt{3}-1)^2(1-\sqrt{\eps})^{2(N-1)}}\, \frac{\eps^{\frac{1}{2} +\frac{N+1}{2p}}}{|C_\eps|} .
$$
Noting finally that
$$
\frac{2^{N+1}(2\pi)^{N}C}{(\sqrt{3}-1)^2 (1-\sqrt{\eps})^{2(N-1)}} = O(1) \qquad \text{as $\eps \to 0^+$}
$$
and that 
$$
|C_\eps| = \eps^{\frac{N-1}{4}}\bigl( \omega_{N-2}+ o(1)\bigr) \qquad \text{as $\eps \to 0^+$,}
$$
where $\omega_{N-2}$ is the measure of the $N-2$-dimensional unit sphere, we conclude that
$$
R_\eps(p) = O\bigl(\eps^{\frac{1}{2} +\frac{N+1}{2p}-\frac{N-1}{4}}\bigr) = O\bigl(\eps^{\frac{3}{4}+\frac{1}{2p} - \frac{N}{2}(\frac{1}{2}-\frac{1}{p})}\bigr) \qquad \text{as $\eps \to 0^+$.}
$$ 
The claim follows.
\end{proof}

\section{Lower estimates for $R_\eps(p)$ and $R_\eps^{rad}(p)$}
\label{sec:lower-estim-kapp}

We now turn to deriving lower estimates for $R_\eps(p)$ and $R_\eps^{rad}(p)$. We can summarize our results as follows.

\begin{proposition}
\label{prop:lower-estim-kapp}
The following lower bounds hold.  
\begin{enumerate}
\item[(i)]  If $2_* \le p \le 2^*$, we have 
$$
R_\eps(p) \ge 
\frac{2 \CC_{ST}(p)}{\pi}\sqrt{\eps} + o(\sqrt{\eps}) \qquad \text{as $\eps \to 0^+$.}  
$$
\item[(ii)]  If $2_*^{rad} < p \le 2^*$, we have 
$$
R^{rad}_\eps(p) \ge 
\frac{2 \CC_{ST}^{rad}(p)}{\pi} \sqrt{\eps}+  o(\sqrt{\eps}) \qquad \text{as $\eps \to 0^+$.}    
$$

\item[(iii)] If $2< p < 2_*$, we have
$$
R_\eps(p) \ge  
\Bigl(\frac{2 \CC_{ST}(2_*)}{\pi}\Bigr)^{(N+1)(\frac{1}{2}-\frac{1}{p})}
\eps^{\frac{3}{4}+ \frac{1}{2p} -\frac{N}{2}(\frac{1}{2}-\frac{1}{p})} +o(\eps^{\frac{3}{4}+ \frac{1}{2p} -\frac{N}{2}(\frac{1}{2}-\frac{1}{p})}) \qquad \text{as $\eps \to 0^+$.}
$$
\item[(iv)] If $2< p \le 2_*^{rad}$, then we have
  \begin{equation}
    \label{last-lower-bound}
R^{rad}_\eps(p) \ge \Bigl(\frac{2 \CC_{ST}^{rad}(q_\beta)}{\pi}\Bigr)^{2-2\beta}\eps^{\beta} + o(\eps^\beta) \qquad \text{as $\eps \to 0^+$}
  \end{equation}
for every
\begin{equation}
  \label{eq:beta-cond}
  \beta  \:\in\: \left \{
    \begin{aligned}
&\bigl(1- N(\frac{1}{2}-\frac{1}{p})\,,\, \frac{1}{2}+\frac{1}{p} \bigr)&&\qquad \text{in the case $N \le 4$,}\\       
&\bigl(1- N(\frac{1}{2}-\frac{1}{p})\,,\, 1- \frac{N}{4}(\frac{1}{2}-\frac{1}{p})  \bigr)&&\qquad \text{in the case $N \ge 5$}
\end{aligned}
\right.
\end{equation}
and $q_\beta=\frac{4(1-\beta)}{1+\frac{2}{p}-2\beta} \in (2_*^{rad},2^*]$. 
\end{enumerate}
\end{proposition}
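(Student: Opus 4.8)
The plan is to turn the Fourier-side lower bound encoded in $\qq_\eps$ into an $L^p$ bound by slicing in the radial variable and applying the Stein--Tomas inequality (Theorem~\ref{stein-tomas}) on each sphere $\{|\xi|=r\}$. Given $u\in H$ and $r>0$, let $w_r\in L^2(S)$ be the trace $w_r(\theta):=\hat u(r\theta)$, so that, passing to polar coordinates $\xi=r\theta$,
$$
\qq_\eps(u)=\int_0^\infty g_\eps(r)\,r^{N-1}\|w_r\|_{L^2(S)}^2\,dr,\qquad u(x)=\int_0^\infty r^{N-1}\,\check w_r(rx)\,dr,
$$
with $\check w_r$ as in Theorem~\ref{stein-tomas}. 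Minkowski's integral inequality together with the scaling identity $\|\check w_r(r\,\cdot)\|_p=r^{-N/p}\|\check w_r\|_p$ yields $\|u\|_p\le\int_0^\infty r^{N-1-N/p}\|\check w_r\|_p\,dr$. For part (i), i.e.\ $2_*\le p<2^*$, I would bound $\|\check w_r\|_p\le\CC_{ST}(p)^{-1/2}\|w_r\|_{L^2(S)}$ via Theorem~\ref{stein-tomas} and then apply Cauchy--Schwarz in $r$ against the weight $g_\eps(r)r^{N-1}$, arriving at
$$
\|u\|_p\le\CC_{ST}(p)^{-1/2}\Bigl(\int_0^\infty\frac{r^{N-1-2N/p}}{g_\eps(r)}\,dr\Bigr)^{1/2}\sqrt{\qq_\eps(u)}.
$$
The integral $J_\eps:=\int_0^\infty r^{N-1-2N/p}g_\eps(r)^{-1}\,dr$ is finite exactly because $2<p<2^*$, and Lemma~\ref{lemma-appendix} and Remark~\ref{remark-appendix} (with the continuous weight $r\mapsto r^{N-1-2N/p}$, which equals $1$ at $r=1$) give $\sqrt\eps\,J_\eps\to\frac{\pi}{2}$ as $\eps\to0^+$. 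Since the resulting estimate $\|u\|_p^2\le\CC_{ST}(p)^{-1}J_\eps\,\qq_\eps(u)$ is uniform in $u$, rearranging proves (i).

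Part (ii) is the verbatim argument restricted to radial $u$, for which $w_r$ is constant on $S$ and one has the sharp identity $\|\check w_r\|_p=\CC_{ST}^{rad}(p)^{-1/2}\|w_r\|_{L^2(S)}$, valid for all $p>2_*^{rad}$. At the critical endpoint $p=2^*$ the integral $J_\eps$ diverges, so one first decomposes $u=u_{\mathrm{near}}+u_{\mathrm{far}}$ according to whether $\bigl||\xi|-1\bigr|$ is smaller or larger than a fixed small $\delta>0$: the near part is handled as above, now with the finite integral $\int_{|r-1|<\delta}r^{N-1-2N/p}g_\eps(r)^{-1}\,dr$ (still of order $\eps^{-1/2}$ with leading constant $\tfrac\pi2$), while $\qq_\eps(u_{\mathrm{far}})\ge c_\delta\|u_{\mathrm{far}}\|_{H^2}^2\ge c_\delta'\|u_{\mathrm{far}}\|_{L^{2^*}}^2$ for constants $c_\delta,c_\delta'>0$ by the Sobolev embedding $H^2\hookrightarrow L^{2^*}$; since the near contribution dominates, the cross terms are absorbed into the $o(\sqrt\eps)$ error term.

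For (iii) and (iv), where $p<2_*$ and Theorem~\ref{stein-tomas} is unavailable at the exponent $p$, the plan is to interpolate against the $L^2$-norm. For (iii), write $\frac1p=\frac{1-\alpha}{2}+\frac{\alpha}{2_*}$ with $\alpha=(N+1)(\frac12-\frac1p)\in(0,1)$, so $\|u\|_p\le\|u\|_2^{1-\alpha}\|u\|_{2_*}^{\alpha}$. Since $g_\eps(r)\ge\eps$ for all $r$, we get the crude bound $\|u\|_2^2\le\eps^{-1}\qq_\eps(u)$, while part (i) at the endpoint $p=2_*$ gives $\|u\|_{2_*}^2\le\frac{\pi}{2\CC_{ST}(2_*)}\eps^{-1/2}(1+o(1))\,\qq_\eps(u)$ uniformly in $u$; multiplying these with exponents $1-\alpha$ and $\alpha$ and using $(1-\alpha)+\frac\alpha2=\frac34+\frac1{2p}-\frac N2(\frac12-\frac1p)$ produces (iii). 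For (iv) I would run the analogous interpolation between $L^2$ and $L^{q_\beta}$ with $q_\beta$ determined by requiring the final power of $\eps$ to be the prescribed $\beta$; imposing $\frac1p=\frac{1-\alpha}{2}+\frac{\alpha}{q_\beta}$ with $\alpha=2-2\beta$ forces $q_\beta=\frac{4(1-\beta)}{1+\frac2p-2\beta}$, and combining $\|u\|_2^2\le\eps^{-1}\qq_\eps(u)$ with part (ii) at the exponent $q_\beta$ (admissible for radial $u$ as soon as $q_\beta\in(2_*^{rad},2^*]$) yields $R_\eps^{rad}(p)\ge\bigl(\frac{2\CC_{ST}^{rad}(q_\beta)}{\pi}\bigr)^{2-2\beta}\eps^{\beta}+o(\eps^{\beta})$. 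One checks by elementary manipulation that $q_\beta>2_*^{rad}\iff\beta>1-N(\frac12-\frac1p)$, that $q_\beta<\infty$ (relevant when $N\le4$) means $\beta<\frac12+\frac1p$, and that $q_\beta\le2^*\iff\beta\le1-\frac N4(\frac12-\frac1p)$ when $N\ge5$; on these ranges $\alpha=2-2\beta\in(0,1)$ is automatic.

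The main difficulty is the first step: one must make sure that the chain Minkowski $\to$ Stein--Tomas $\to$ Cauchy--Schwarz is asymptotically lossless, so that $\frac{2}{\pi}\CC_{ST}(p)$ really is the leading coefficient of $R_\eps(p)$ and not merely a lower bound of the wrong order. This is confirmed a posteriori by the matching upper bounds of Section~\ref{sec:upper-estim-kapp}, which are attained by test functions concentrated near $\{|\xi|=1\}$ along a Stein--Tomas optimizer --- precisely the near-equality configuration in the three inequalities above. The remaining care lies in the exact evaluation $\sqrt\eps\,J_\eps\to\frac\pi2$ through Lemma~\ref{lemma-appendix} and in the separate localized treatment of the critical case $p=2^*$; the interpolation steps leading to (iii) and (iv) are then routine, modulo the precise identification of the admissible exponent ranges carried out above.
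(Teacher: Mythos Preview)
Your proposal is correct and follows essentially the same approach as the paper: the core chain Minkowski $\to$ Stein--Tomas $\to$ Cauchy--Schwarz in the radial variable, followed by the evaluation via Lemma~\ref{lemma-appendix} and then $L^2$--$L^q$ interpolation for parts (iii) and (iv), is exactly what the paper does. The one organizational difference is that for $2_*\le p<2^*$ you integrate in $r$ over all of $(0,\infty)$ and observe that the resulting weighted integral $J_\eps$ is finite, whereas the paper always localizes to $|r-1|<\delta$, works with a specific minimizer $u_\eps$, and invokes the upper bound of Proposition~\ref{prop:upper-estim-kapp-large} to show the far part is negligible before sending $\delta\to0$; your variant is slightly more direct in the subcritical range, while the paper's argument has the advantage of treating the endpoint $p=2^*$ uniformly with the other cases rather than as a separate add-on.
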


\begin{proof}
  (i) Let $\delta \in (0,1)$ and $A_\delta:= \{\xi \in \R^N\::\: \bigl| |\xi|-1\bigr| \le \delta\}$, and let $u \in H$ be a function with $\hat u(\xi) =0$ for $\xi \in \R^N \setminus A_\delta$. Then
\begin{align*}
\|u\|_{p} &= (2\pi)^{-N/2} \Bigl\| \int_{\R^N} e^{i (\cdot) \xi} \hat u(\xi)\,d\xi \Bigr\|_p= (2\pi)^{-N/2} \Bigl\| \int_{1-\delta}^{1+\delta}r^{N-1} \int_{S} e^{i r (\cdot) \theta} \hat u(r \theta)\,d \sigma(\theta) dr\Bigr\|_p\\
&\le (2\pi)^{-N/2} \int_{1-\delta}^{1+\delta} r^{N-1} \Bigl\|  \int_{S} 
e^{i r (\cdot) \theta} \hat u(r \theta)\,d \sigma(\theta) \Bigr\|_p dr \\
& = (2\pi)^{-N/2} \int_{1-\delta}^{1+\delta} r^{N-1-\frac{N}{p}} \Bigl\|  \int_{S}  e^{i (\cdot) \theta} \hat u(r \theta)\,d \sigma(\theta) \Bigr\|_p dr\\
&\le \frac{1}{\sqrt{\CC_{ST}(p)}} \int_{1-\delta}^{1+\delta} r^{N-1-\frac{N}{p}} \Bigl\| \hat u(r (\cdot))\Bigr\|_{L^2(S)} dr\\
&\le \frac{1}{\sqrt{\CC_{ST}(p)}} \Bigl(\int_{1-\delta}^{1+\delta} r^{N-1-\frac{2N}{p}} {g_{\eps}}^{-1}(r)dr\Bigr)^{\frac{1}{2}} \Bigl(\int_{1-\delta}^{1+\delta} r^{N-1} {g_{\eps}}(r) \bigl\| \hat u(r (\cdot))\bigr\|_{L^2(S)}^2 dr\Bigr)^{\frac{1}{2}}\\
&= \frac{1}{\sqrt{\CC_{ST}(p)}} \Bigl(\int_{1-\delta}^{1+\delta} r^{N-1-\frac{2N}{p}}{g_{\eps}}^{-1}(r)dr\Bigr)^{\frac{1}{2}} \Bigl(\int_{1-\delta}^{1+\delta} r^{N-1} {g_{\eps}}(r) \int_{S}|\hat u(r \theta)|^2 d \sigma(\theta) dr\Bigr)^{\frac{1}{2}}\\
&= \frac{1}{\sqrt{\CC_{ST}(p)}} \Bigl(\int_{1-\delta}^{1+\delta} r^{N-1-\frac{2N}{p}}{g_{\eps}}^{-1}(r)dr\Bigr)^{\frac{1}{2}} \Bigl(\int_{\R^N}{g_{\eps}}(|\xi|) |\hat u(\xi)|^2 d\xi \Bigr)^{\frac{1}{2}}\\
&= \frac{1}{\sqrt{\CC_{ST}(p)}} \Bigl(\int_{1-\delta}^{1+\delta} r^{N-1-\frac{2N}{p}}{g_{\eps}}^{-1}(r)dr\Bigr)^{\frac{1}{2}}\sqrt{\qq_{\eps}(u)}\\
&\le \frac{1}{\sqrt{\CC_{ST}(p)}} (1+\delta)^{\frac{N-1}{2}-\frac{N}{p}} \Bigl(\int_{1-\delta}^{1+\delta} {g_{\eps}}^{-1}(r)dr\Bigr)^{\frac{1}{2}}\sqrt{\qq_{\eps}(u)}.
\end{align*}
Here we used the fact that $\frac{N-1}{2}-\frac{N}{p}>0$ since $p>2_*$. Consequently, 
\begin{equation}
\label{lower-est-restricted}  
\frac{\qq_{\eps}(u)}{\|u\|_p^2} \ge (1+\delta)^{\frac{2N}{p}-(N-1)} \CC_{ST}(p)\Bigl(\int_{1-\delta}^{1+\delta} {g_{\eps}}^{-1}(r)dr\Bigr)^{-1}.
\end{equation}
For $\eps>0$, let $u_\eps \in H$ be a function with $\|u_\eps \|_p = 1$ and $\qq_\eps(u_\eps)=R_\eps(p)$, i.e., $u_\eps$ minimizes the quotient in (\ref{eq:def-kappa-eps}). We write $u_\eps = v_\eps + z_\eps$ with 
$$
\hat v_\eps = \hat u_\eps 1_{A_\delta} \qquad \text{and}\qquad \hat z_\eps = 
\hat u_\eps 1_{\R^N \setminus A_\delta} 
$$
By the properties of $g_\eps$, there exists a constant $c= c(\delta)>0$ with 
$$
g_\eps(r) \ge c(1+ r^4) \qquad \text{for $r \in [0,1-\delta] \cup [1+\delta,\infty)$ and $\eps>0$.}
$$
Consequently, by Proposition~\ref{prop:upper-estim-kapp-large} and Sobolev embeddings, 
$$
O(\sqrt{\eps}) \ge  \qq_\eps(u_\eps) \ge \qq_\eps(z_\eps) \ge c \int_{\R^N} (1+ |\xi|^4) |\hat{z}_\eps(\xi)|^2 \ge c_1 \|z_\eps\|_p^2 \qquad \text{as  $\eps \to 0$}
$$ 
with a constant $c_1>0$. Therefore, 
$$
\|v_\eps\|_p \ge \|u_\eps\|_p - \|z_\eps\|_p = 1-o(1) \qquad \text{as $\eps \to 0$.}
$$
Applying the estimate~(\ref{lower-est-restricted}) to $v_\eps$ in place of $u$, we find
$$
R_\eps(p) = \qq_{\eps}(u_\eps) \ge \qq_{\eps}(v_\eps)= (1-o(1))\frac{\qq_\eps(v_\eps)}{\|v_\eps\|_p^2}= (1-o(1))(1+\delta)^{\frac{2N}{p}-(N-1)} \CC_{ST}(p) \Bigl(\int_{1-\delta}^{1+\delta} {g_{\eps}}^{-1}(r)dr\Bigr)^{-1}.
$$
From Lemma~\ref{lemma-appendix} below, we thus deduce that 
\begin{align*}
\liminf_{\eps \to 0} \frac{R_\eps(p)}{\sqrt{\eps}} & \ge  (1+\delta)^{\frac{2N}{p}-(N-1)} \CC_{ST}(p) \lim_{\eps \to 0} \frac{1}{\sqrt{\eps}}\Bigl(\int_{1-\delta}^{1+\delta} {g_{\eps}}^{-1}(r)dr\Bigr)^{-1} \\
& = (1+\delta)^{\frac{2N}{p}-(N-1)} \frac{2 \CC_{ST}(p)}{\pi} .
\end{align*}
Since $\delta \in (0,1)$ can be chosen arbitrarily, we conclude that 
$$
\liminf_{\eps \to 0} \frac{R_\eps(p)}{\sqrt{\eps}} \ge  \frac{2 \CC_{ST}(p)}{\pi}. 
$$
(ii) This follows by precisely the same argument as in (i), where now the definition of $\CC_{ST}^{rad}(p)$ is used in place of Theorem~\ref{stein-tomas}.\\
(iii) We use the interpolation inequality  
$$
\|u\|_p \le \|u\|_2^{1-\alpha}\|u\|_{2_*}^{\alpha} \qquad \text{for $u \in H$}
$$
with
$$
\alpha = \frac{1-\frac{2}{p}}{1-\frac{2}{2_*}} = \frac{1-\frac{2}{p}}{1-\frac{N-1}{N+1}}= (N+1)\bigl(\frac{1}{2}-\frac{1}{p}\bigr).
$$
Now for every $u \in H$ we have 
$$
\eps \|u\|_2^2  = \eps \int_{\R^N}|\hat u (\xi)|^2d\xi \le \int_{\R^N}g_\eps(|\xi|)|\hat u (\xi)|^2d\xi =  \qq_\eps(u), 
$$
and therefore 
$$
\frac{\qq_\eps(u)}{\|u\|_p^2} \ge \frac{\qq_\eps(u)^{1-\alpha} \qq_\eps(u)^\alpha }{\bigl(\|u\|_2^2\bigr)^{1-\alpha}\bigl(\|u\|_{2_*}^2\bigr)^{\alpha}} \ge \eps^{1-\alpha} \Bigl(\frac{\qq_\eps(u)}{\|u\|_{2_*}^2}\Bigr)^{\alpha}.
$$
Consequently, by (i),
\begin{align*}
  R_\eps(p) \ge \eps^{1-\alpha} \Bigl( \inf_{u \in H \setminus \{0\}}\frac{\qq_\eps(u)}{\|u\|_{2_*}^2}\Bigr)^{\alpha} 
  &\ge \eps^{1-\alpha} \Bigl(\frac{2 \CC_{ST}(2_*)}{\pi}\eps^{\frac{1}{2}}+o(\eps^{\frac{1}{2}})\Bigr)^{\alpha}\\
  &\ge \Bigl(\frac{2 \CC_{ST}(2_*)}{\pi}\Bigr)^\alpha \eps^{1-\frac{\alpha}{2}} + o(\eps^{1-\frac{\alpha}{2}})
\end{align*}
Since $1-\frac{\alpha}{2} = \frac{3}{4}+ \frac{1}{2p} -\frac{N}{2}(\frac{1}{2}-\frac{1}{p})$, the claim follows.\\
(iv) We argue similarly as in (iii), choosing now $q> 2_*^{rad}=\frac{2N}{N-1}$ and setting $\alpha = \alpha_{p,q}= \frac{1-\frac{2}{p}}{1-\frac{2}{q}}$. We then use the interpolation inequality $\|u\|_p \le \|u\|_2^{1-\alpha}\|u\|_{q}^{\alpha}$ and obtain, as above, that 
$$
\frac{\qq_\eps(u)}{\|u\|_p^2} \ge \eps^{1-\alpha} \Bigl(\frac{\qq_\eps(u)}{\|u\|_{q}^2}\Bigr)^{\alpha} \qquad \text{for every $u \in H_{rad}$.}
$$
Consequently, by (ii), 
\begin{align}
R_\eps^{rad}(p) \ge \eps^{1-\alpha} \Bigl( \inf_{u \in H_{rad} \setminus \{0\}}\frac{\qq_\eps(u)}{\|u\|_{q}^2}\Bigr)^{\alpha} 
  &\ge \eps^{1-\alpha} \Bigl(\frac{2 \CC_{ST}^{rad}(q)}{\pi}\eps^{\frac{1}{2}}+o(\eps^{\frac{1}{2}})\Bigr)^{\alpha} \nonumber\\
&\ge \Bigl(\frac{2 \CC_{ST}^{rad}(q)}{\pi}\Bigr)^\alpha \eps^{1-\frac{\alpha}{2}} + o(\eps^{1-\frac{\alpha}{2}}). \label{last-lower-bound-proof}
\end{align}
Now, for any $\beta$ satisfying the restrictions in (\ref{eq:beta-cond}), we now choose 
$$
q = q_\beta = \frac{2}{1- \frac{1-\frac{2}{p}}{2-2\beta}}= \frac{4(1-\beta)}{1+\frac{2}{p}-2\beta} \quad \in \quad \bigl(2_*^{rad}, 2^*\bigr),  
$$
and we note that
$$
\alpha = \alpha_{p,q} = \frac{1-\frac{2}{p}}{1-\frac{2}{q}}= 2- 2\beta
$$
in this case, i.\,e, we have $1-\frac{\alpha}{2} = \beta$. Hence (\ref{last-lower-bound-proof}) implies (\ref{last-lower-bound}). The proof of Proposition \ref{prop:lower-estim-kapp} is thus finished.
\end{proof}

\section{The Dirichlet problem in the unit ball}
\label{sec:dirichl-probl-unit}
In this section we complete the proof of Theorem~\ref{theorem-nonradial-ball}. For this we fix $p \in (2,2_*)$ and we put $B_r:= B_r(0)$ for $r>0$. Moreover, we consider, for $\eps>0$, the restriction
$$
u \mapsto \qq_{\eps}^r(u) = \int_{B_r}\Bigl( |\Delta u|^2- 2   |\nabla u|^2 + (1+\eps) |u|^2 \Bigr)\,dx 
$$
of the quadratic form $\qq_{\eps}$ defined in \eqref{eq:def-q-eps} to the subspace $H_0^2(B_r) \subset H$. We also define
\begin{equation}
  \label{eq:def-kappa-eps-r}
R_\eps^r(p):= \inf_{u \in H_0^2(B_r) \setminus \{0\} } \frac{\qq_\eps^r(u) }{ \|u\|_{L^p(B_r)}^2 }
\end{equation}
and we note that $R_\eps^r(p) \ge R_\eps(p)$ for every $r>0$. Moreover, from the fact that $C_0^\infty(\R^N)$ is dense in $H$ and Sobolev embeddings, it is easy to deduce that
\begin{equation}
  \label{eq:R-eps-limit}
R_\eps^r(p) \to R_\eps(p) \qquad \text{as $r \to \infty$.}
\end{equation}
We also define 
\begin{equation*}
R_\eps^{rad,r}(p):= \inf_{\stackrel{u \in H_0^2(B_r) \setminus \{0\}}{u \text{ radial}} } \frac{\qq_\eps^r(u) }{ \|u\|_{L^p(B_r)}^2 },
\end{equation*}
and we note that
\begin{equation}
  \label{eq:R-eps--rad-ineq}
R_\eps^{rad,r}(p) \ge R_\eps^{rad}(p) \qquad \text{for every $r>0$,}
\end{equation}
where $R_\eps^{rad}(p)$ is defined in (\ref{eq:def-kappa-eps-rad}). Next, we let $0 < \eps \le \eps_0$ with $\eps_0=\eps_0(p)$ given in Theorem~\ref{theorem-nonradial}. Combining (\ref{eq:rad-strict-ineq}) with (\ref{eq:R-eps-limit}) and (\ref{eq:R-eps--rad-ineq}), we find that there exists $a_0= a_0(p,\eps)>0$ with the property that
\begin{equation}
  \label{eq:R-rad-eps-ineq-2}
  R_\eps^{rad,r}(p) >R_\eps^{r}(p)\qquad \text{for $r>\sqrt{a_0}$.}
\end{equation}
Next, we define, for $a>0$, the scaling map $u \mapsto \cS_a u:= u( \frac{(\cdot)}{\sqrt{a}})$, which is a topological isomorphism between the spaces $H^2_0(B)$ and $H^2_0(B_{\sqrt{a}})$. Moreover, $\cS_a$ maps radial functions to radial functions. A change of variables shows that 
\begin{equation}
\label{scaling-eq}  
\qq_{\eps}^{\sqrt{a}}(\cS_a u) = a^{\frac{N}{2}-2} \qq_{a,(1+\eps)a^2,B}(u)\quad \text{and}\quad \|\cS_a u\|_{L^p(B_{\sqrt{a})})} = a^{\frac{N}{2p}} \|u\|_{L^p(B)}
\end{equation}
for every $u \in H^2_0(B)$, where $\qq_{a,(1+\eps)a^2,B}$ is defined in (\ref{eq:def-kappa-a-b-ball}). By (\ref{eq:R-rad-eps-ineq-2}) and (\ref{scaling-eq}), we have
$$
\frac{\qq_{a,(1+\eps)a^2,B}(u)}{\|u\|_{L^p(B)}^2} > R_{a,(1+\eps)a^2,B} \qquad \text{for every $a>a_0$ and every radial function $u \in H^2_0(B)$,}
$$
where $R_{a,(1+\eps)a^2,B}$ is defined in (\ref{eq:def-kappa-a-b-ball}).
Hence, if $a>a_0$ and $b= (1+\eps)u^2$, then every ground state solution of  
 (\ref{eq:biharmonic-bvp-ball}) is nonradial. This concludes the proof of Theorem~\ref{theorem-nonradial-ball}.

\begin{appendix}

\section{A Technical Lemma}
\label{sec:technical-lemma}

\begin{lemma}
\label{lemma-appendix}
Suppose that $a \in \R$, $\delta>0$, and let $g_0:[a-\delta,a+\delta]$ be a $C^2$-function with $g_0(r)>0$ for $r \in [a-\delta,a+\delta] \setminus \{a\}$ and $g_0(a)=g_0'(a)=0$, $g_0''(a)>0$. Moreover, let 
$$
g_\eps:[a-\delta,a+\delta] \to \R, \qquad g_\eps(r)=g_0(r)+\eps.
$$
Finally, let $\eps_0>0$ and $\tau: (0,\eps_0) \to [0,\delta]$ be a function with
$\lim \limits_{\eps \to 0^+}\frac{\tau(\eps)}{\sqrt{\eps}}= \infty.$
Then we have 
$$
\lim_{\eps \to 0^+} \sqrt{\eps} \int_{a-\tau(\eps)}^{a+\tau(\eps)} \frac{dr}{g_\eps(r)} = 
\frac{\pi}{\sqrt{\frac{1}{2}g_0''(a)}}.
$$
\end{lemma}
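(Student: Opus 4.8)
The plan is to reduce the integral to the explicit model $\int \frac{ds}{\eps+\gamma s^2}$, which is evaluated by an $\arctan$, and to control the difference between $g_0$ and its quadratic Taylor part at $a$ by a two-sided pinching argument. After the translation $s=r-a$ we may assume $a=0$; set $c:=\frac{1}{2} g_0''(0)>0$. Since $g_0\in C^2$ with $g_0(0)=g_0'(0)=0$, Taylor's formula with Lagrange remainder together with the continuity of $g_0''$ yields, for each fixed $\eta\in(0,c)$, a radius $\delta_\eta\in(0,\delta]$ with
\[
(c-\eta)s^2\ \le\ g_0(s)\ \le\ (c+\eta)s^2\qquad\text{for }|s|\le\delta_\eta .
\]
On the complementary compact set $\{\delta_\eta\le|s|\le\delta\}$ the function $g_0$ is continuous and strictly positive, so $m_\eta:=\min_{\delta_\eta\le|s|\le\delta}g_0(s)>0$.

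Next I would split $\int_{-\tau(\eps)}^{\tau(\eps)}\frac{ds}{g_\eps(s)}$ at $\pm\delta_\eta$, writing $\rho(\eps):=\min(\tau(\eps),\delta_\eta)$. The outer part is supported in $\{\delta_\eta\le|s|\le\delta\}$, where $g_\eps\ge g_0\ge m_\eta$, and that set has length at most $2\delta$, so $\sqrt\eps\int_{\rho(\eps)\le|s|\le\tau(\eps)}\frac{ds}{g_\eps}\le \frac{2\delta}{m_\eta}\sqrt\eps\to0$. For the inner part, the Taylor bounds give
\[
\sqrt\eps\int_{-\rho(\eps)}^{\rho(\eps)}\frac{ds}{\eps+(c+\eta)s^2}\ \le\ \sqrt\eps\int_{-\rho(\eps)}^{\rho(\eps)}\frac{ds}{g_\eps(s)}\ \le\ \sqrt\eps\int_{-\rho(\eps)}^{\rho(\eps)}\frac{ds}{\eps+(c-\eta)s^2},
\]
and the substitution $s=\sqrt{\eps/\gamma}\,t$ evaluates each of the two bounds to $\frac{2}{\sqrt{\gamma}}\arctan\!\bigl(\rho(\eps)\sqrt{\gamma/\eps}\bigr)$ with $\gamma=c\pm\eta$.

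The hypothesis on $\tau$ enters precisely here: $\rho(\eps)/\sqrt\eps=\min\bigl(\tau(\eps)/\sqrt\eps,\ \delta_\eta/\sqrt\eps\bigr)\to\infty$, so $\arctan\!\bigl(\rho(\eps)\sqrt{\gamma/\eps}\bigr)\to\frac\pi2$ and the two bounding quantities tend to $\frac{\pi}{\sqrt{c+\eta}}$ and $\frac{\pi}{\sqrt{c-\eta}}$ respectively. Since the outer contribution vanishes in the limit, combining all of this gives
\[
\frac{\pi}{\sqrt{c+\eta}}\ \le\ \liminf_{\eps\to0^+}\sqrt\eps\!\int_{-\tau(\eps)}^{\tau(\eps)}\!\frac{ds}{g_\eps(s)}\ \le\ \limsup_{\eps\to0^+}\sqrt\eps\!\int_{-\tau(\eps)}^{\tau(\eps)}\!\frac{ds}{g_\eps(s)}\ \le\ \frac{\pi}{\sqrt{c-\eta}},
\]
and letting $\eta\to0^+$ produces the asserted limit $\pi/\sqrt{c}=\pi/\sqrt{\frac{1}{2}g_0''(a)}$. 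I do not anticipate a genuine obstacle; the only subtlety is that $\tau(\eps)$ may itself tend to $0$, so one cannot simply integrate over a fixed neighbourhood of $a$ — this is why the truncation $\rho(\eps)=\min(\tau(\eps),\delta_\eta)$ is used, its ratio to $\sqrt\eps$ still diverging because both $\tau(\eps)/\sqrt\eps$ and $\delta_\eta/\sqrt\eps$ do.
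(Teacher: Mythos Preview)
Your proof is correct. The approach, however, differs from the paper's. After the same translation to $a=0$, the paper performs the single substitution $r=\sqrt{\eps}\,t$ to rewrite
\[
\sqrt{\eps}\int_{-\tau(\eps)}^{\tau(\eps)}\frac{dr}{g_\eps(r)}
=\int_{-\tau(\eps)/\sqrt{\eps}}^{\tau(\eps)/\sqrt{\eps}}\frac{dt}{\frac{g_0(\sqrt{\eps}\,t)}{\eps}+1},
\]
notes that the integrand is dominated on all of $\R$ by the integrable function $\frac{1}{ct^2+1}$ (using only a one-sided global bound $g_0(r)\ge cr^2$ on $[-\delta,\delta]$), and then applies Lebesgue's dominated convergence theorem to the pointwise limit $\frac{1}{\frac12 g_0''(0)\,t^2+1}$. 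Your argument instead uses a two-sided Taylor pinching $(c-\eta)s^2\le g_0(s)\le(c+\eta)s^2$ near $0$, explicit $\arctan$ evaluation of the model integrals, and a final $\eta\to0$ passage. The paper's route is shorter and dispatches the limit in one stroke via dominated convergence; your route is more elementary (no appeal to DCT), entirely self-contained, and makes the quadratic approximation explicit. Both arguments handle the possibility that $\tau(\eps)\to0$ without difficulty.
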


\begin{remark}{\em 
  \label{remark-appendix}
  Under the assumptions of Lemma~\ref{lemma-appendix}, it follows in particular, by choosing the constant function $\tau \equiv \delta$, that
  $$
  \lim_{\eps \to 0^+} \sqrt{\eps} \int_{a-\delta}^{a+\delta} \frac{dr}{g_\eps(r)} = \frac{\pi}{\sqrt{\frac{1}{2} g_0''(a)}}
 $$
 Moreover, it follows that
 $$
\lim_{\eps \to 0^+} \sqrt{\eps} \int_{a-\eps^s}^{a+\eps^s} \frac{dr}{g_\eps(r)} = 
\frac{\pi}{\sqrt{\frac{1}{2} g_0''(a)}} \qquad \text{for every $s \in (0,\frac{1}{2})$.}
$$ }
\end{remark}

\begin{proof}[Proof of Lemma~\ref{lemma-appendix}]
Without loss of generality, we assume that $a=0$. By assumption, there exists a constant $c>0$ with 
$$
g_0(r) \ge c r^2 \qquad \text{for $r \in [-\delta,\delta].$}
$$
Then 
\begin{align*}
\sqrt{\eps} \int_{-\tau(\eps)}^{\tau(\eps)} \frac{dr}{g_\eps(r)} &= \sqrt{\eps} 
\int_{-\tau(\eps)}^{\tau(\eps)} \frac{dr}{g_0(r)+\eps} 
= \int_{-\frac{\tau(\eps)}{\sqrt{\eps}}}^{\frac{\tau(\eps)}{\sqrt{\eps}}} 
\frac{\eps \, dr}{g_0(\sqrt{\eps} r)+\eps}  = \int_{-\frac{\tau(\eps)}{\sqrt{\eps}}}^{\frac{\tau(\eps)}{\sqrt{\eps}}} 
\frac{dr}{\frac{g_0(\sqrt{\eps} r)}{\eps}+1} 
\end{align*}
for every $\eps>0$ and  
$$
\frac{1}{\frac{g_0(\sqrt{\eps} r)}{\eps}+1} \le \frac{1}{c r^2 +1}\qquad \text{for $\eps >0,$ $r \in [-\frac{\tau(\eps)}{\sqrt{\eps}},\frac{\tau(\eps)}{\sqrt{\eps}}]$.}
$$
Since also 
$$
\lim_{\eps \to 0^+} \frac{1}{\frac{g_0(\sqrt{\eps} r)}{\eps}+1}=\frac{1}{\frac{1}{2} g_0''(0) r^2 +1} \qquad \text{for all $r \in \R$,}
$$
Lebesgue's theorem implies that 
$$
\sqrt{\eps} \int_{-\tau(\eps)}^{\tau(\eps)} \frac{dr}{g_\eps(r)} \to \int_{\R} \frac{dr}{\frac{1}{2} g_0''(0) r^2 +1} =  \frac{\pi}{\sqrt{\frac{1}{2} g_0''(0)}} \qquad \text{as $\eps \to 0^+$.}
$$
\end{proof}

\section{Existence and Properties of Ground States}
\label{sec:exist-prop-ground}

In this section, we provide the proof of Theorem \ref{sec:prop-intro}. Assume that $b > a^2$ and $2 < p < 2^*$. Let us first prove that the infimum $R_{a,b}(p)$ is attained. Suppose that $(u_n) \subset H^2(\R^N)$ is a sequence with $\|u_n\|_p= 1$ for all $n \in \N$ and
  \begin{equation}
    \label{eq:c-eps-approx}
\qq_{a,b}(u_n) \to R_{a,b}(p) \quad \mathrm{as} \quad n \to \infty.
  \end{equation}
In particular, the sequence $(u_n)$ is bounded in $H^2(\R^N)$. Since we must have $u_n \not \to 0$ in $L^p(\R^N)$, it follows from Lions' concentration compactness lemma for bounded sequences in $H^2(\R^N)$ together with $2 < p < p_*$ that there exist points $z_n \in \R^N$ with the property that, after passing to a subsequence, the sequence of functions $u_n (\cdot - z_n)$ has a nontrivial weak limit $u \in H^2(\R^N) \setminus \{0 \}$, say.  By translation invariance, we can replace $u_n$ by $u_n(\cdot - z_n)$ for every $n \in \N$, so that
  $$
  u_n \weak u \in  H^2(\R^N) \setminus \{0\}.
  $$
  By Fatou's lemma, we have
  $$
  0 < \|u\|_{p} \le \liminf_{n \to \infty} \|u_n\|_p.
  $$
  We claim that $u_n \to u \in L^p(\R^N)$. If this is not the case, then we may pass to a subsequence with the property that $\|v_n\|_p \to d > 0$ where we set $v_n = u-u_n$. Then we have $\|u\|_p^p + d^p= 1 $ and thus $\|u\|_p^2 + d^2 >1$. Consequently, we deduce
  \begin{align*}
    R_{a,b}(p) &= \lim_{n \to \infty} \qq_{a,b}(u_n) = \lim_{n \to \infty} \Bigl(\qq_{a,b}(u) + \qq_{a,b}(v_n)\Bigr)\\
                &\ge R_{a,b}(p) \lim_{n \to \infty} \Bigl(\|u\|_p^2 + \|v_n\|_p^p\Bigr) = R_{a,b}(p) \Bigl(\|u\|_p^2 + d^p\Bigr)>R_{a,b}(p),
  \end{align*}
  which is contradiction. Hence $u_n \to u$ in $L^p(\R^N)$, which implies that $\|u\|_{L^p}=1$ and, by weak lower semicontinuity,
  $$
 \qq_{a,b}(u) \le \lim_{n \to \infty}\qq_{a,b}(u_n) = R_{a,b}(p).
  $$
  Therefore the infimum $R_{a,b}(p)$ is attained at $u \in H^2(\R^N)$ with $u \not \equiv 0$.   

Next, we claim the any minimizer $u \in H^2(\R^N) \setminus \{0\}$ for $R_{a,b}(p)$ must be real-valued up to a trivial constant complex phase, i.\,e., there is some constant $\theta \in \R$ such that
$$
e^{i \theta} u(x) \in \R \quad \mbox{for a.\,e.~$x \in \R^N$}.
$$
To prove this claim, we adapt an argument in \cite{FrLiSa-2016} (see also \cite{BuLeSo-2019}). First, we recall that
$$
\qq_{a,b}(u) = \int_{\R^N} g_{a,b}(|\xi|) |\hat{u}(\xi)|^2 \, d \xi
$$
where $g_{a,b}(|\xi|) = |\xi|^4 - 2 a |\xi|^2 + b$. We split $u : \R^N \to \C$ into real and imaginary part so that 
$$
u(x) = u_R(x) + i u_I(x)
$$ 
with the real-valued functions $u_R, u_I : \R^N \to \R$. From now on, we assume that $u_R \not \equiv 0$ and $u_I \not \equiv 0$ are both non-trivial, since otherwise the result would directly follow. By elementary properties of the Fourier transform, it holds that $\hat{u}_R(-\xi) = \overline{\hat{u}_R(\xi)}$ and $\hat{u}_I(-\xi) = \overline{\hat{u}_I(\xi)}$ for a.\,e.~$\xi \in \R^N$. Using that $g_{a,b}(|\xi|)$ is a real-valued and even, an elementary calculation shows 
\be \label{eq:q_split}
\qq_{a,b}(u) = \qq_{a,b}(u_R) + \qq_{a,b}(u_I).
\ee
On the other hand, we use the fact that $p > 2$ to deduce that
\be \label{ineq:hanner}
\| u \|_{L^p}^2 = \| |u_R|^2 + |u_I|^2 \|_{L^{p/2}} \leq \| |u_R|^2 \|_{L^{p/2}} + \| |u_I|^2 \|_{L^{p/2}} = \| u_R \|_{L^p}^2 + \| u_I \|_{L^p}^2 .
\ee
From the strict convexity of the $L^{p/2}$-norm for $p> 2$ (Hanner's inequality) we see that equality holds in \eqref{ineq:hanner} if and only if $u_I=0$ or $u_R^2 = \alpha^2 u_I^2$ with some constant $\alpha \geq 0$. 

By using \eqref{eq:q_split} and \eqref{ineq:hanner} together with the fact that $u \in H$ minimizes $R_{a,b}(p)$, we find
$$
R_{a,b}(u) = \frac{\qq_{a,b}(u)}{\|u \|_{L^p}^2} \geq \frac{\qq_{a,b}(u_R) + \qq_{a,b}(u_I)}{\| u_R \|_{L^p}^2 + \| u_I \|_{L^p}^2} \geq \min \left ( \frac{\qq_{a,b}(u_R)}{\| u_R \|_{L^p}^2}, \frac{\qq_{a,b}(u_I)}{\| u_I \|_{L^p}^2} \right ) \geq R_{a,b}(p).
$$
Since $u_R \not \equiv 0 \not \equiv u_I$ by assumption, equality in \eqref{ineq:hanner} yields that $u_I^2 = \alpha^2 u_R^2$ for some constant $\alpha > 0$. Next we claim that $u_I = \pm \alpha u_R$ holds, which would complete the proof. To see this, we define the real-valued functions $u_1 = \frac{1}{\sqrt{2}}(u_R + u_I)$ and $u_2 = \frac{1}{\sqrt{2}}(-u_R+u_I)$. This gives us
$$
u(x) = e^{i\pi/4} u_1(x) + i e^{i \pi/4} u_2(x).
$$
If $u_2 \equiv 0$ or $u_1 \equiv 0$, we are done since this implies $u_I = \pm u_R$. Thus we can assume $u_1 \not \equiv 0$ and $u_2 \not \equiv 0$.  In the same fashion as above, find that we can use \eqref{eq:q_split} and \eqref{ineq:hanner} with $u_R, u_I$ replaced by $u_1, u_2$.  Hence we deduce that $u_2^2 = \beta^2 u_1^2$ with some constant $\beta > 0$. Notice that $\beta^2 \neq 1$ holds, because otherwise we get $u_R u_I \equiv 0$ (which yields $u \equiv 0$ from $u_I = \alpha^2 u_R^2$). In summary, we have found that
$$
u_R^2 = \alpha^2 u_I^2 \quad \mbox{and} \quad \frac{1}{2} ( 1+ \alpha^2) (1-\beta^2) u_R^2 = (1+\beta^2) u_R u_I,
$$
which implies that $u_I = \pm \alpha u_R$. Thus the functions $u_R$ and $u_I$ are linearly dependent and hence $e^{i \theta} u(x) \in \R$ almost everywhere in $\R^N$ with some constant $\theta \in \R$. This completes the proof of Theorem \ref{sec:prop-intro}. \hfill $\square$

\medskip
Finally, we use the method of Fourier symmetrization to obtain the following symmetry result in the case of even-integer $p > 2$.

\begin{lemma} \label{lem:even}
Let $N \geq 1$, $2 < p < 2^*$, $b > a^2$, and suppose that $p \in 2 \mathbb{N}$ is an even integer. Then any minimizer $u \in H$ for $R_{a,b}(p)$ must be an even function, i.\,e., we have $u(-x) = u(x)$ for a.\,e.~$x \in \R^N$.
\end{lemma}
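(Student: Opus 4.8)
The plan is to use the \emph{Fourier rearrangement} method of \cite{BuLeSo-2019, LeSo-21}, which is tailored to even integers: when $p = 2k$ with $k \in \N$, the quantity $\|u\|_p^p = \|u^k\|_2^2$ becomes, via Plancherel, a purely multilinear expression in the Fourier transform $\hat u$, whereas the quadratic form $\qq_{a,b}$ depends on $u$ only through $|\hat u|^2$; this mismatch is what forces symmetry.

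First I would reduce to the real-valued case. By Theorem~\ref{sec:prop-intro} there is $\theta \in \R$ with $e^{i\theta}u$ real-valued, and multiplication by a constant phase changes neither the ratio defining $R_{a,b}(p)$ nor the property of being even up to translation, so we may assume $u$ is real-valued and hence $\hat u(-\xi) = \overline{\hat u(\xi)}$ for a.e.\ $\xi$. Next I would introduce the symmetrized function $v \in H$ defined by $\hat v := |\hat u|$. Since $\hat v$ is real, nonnegative and even (the evenness using $|\hat u(-\xi)| = |\hat u(\xi)|$), the function $v$ is real-valued and even, and $v \in H = H^2(\R^N)$ because $\|v\|_{H^2} = \|u\|_{H^2}$. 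By construction $\qq_{a,b}(v) = \int_{\R^N} g_{a,b}(|\xi|)\,|\hat u(\xi)|^2\,d\xi = \qq_{a,b}(u)$, whereas writing $p = 2k$ and combining Plancherel with the iterated triangle inequality for convolutions,
\[
\|u\|_p^p \;=\; \|u^k\|_2^2 \;=\; c_{k,N}\,\bigl\|\underbrace{\hat u \ast \cdots \ast \hat u}_{k}\bigr\|_2^2 \;\le\; c_{k,N}\,\bigl\|\underbrace{|\hat u| \ast \cdots \ast |\hat u|}_{k}\bigr\|_2^2 \;=\; \|v\|_p^p ,
\]
with a constant $c_{k,N}>0$. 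Hence $v \ne 0$ and
\[
R_{a,b}(p) \;\le\; \frac{\qq_{a,b}(v)}{\|v\|_p^2} \;=\; \frac{\qq_{a,b}(u)}{\|v\|_p^2} \;\le\; \frac{\qq_{a,b}(u)}{\|u\|_p^2} \;=\; R_{a,b}(p) ,
\]
so equality holds throughout; in particular $\|v\|_p = \|u\|_p$, which means equality in the convolution inequality, i.e.\ $\bigl|(\hat u)^{\ast k}(\zeta)\bigr| = \bigl(|\hat u|\bigr)^{\ast k}(\zeta)$ for a.e.\ $\zeta \in \R^N$.

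The decisive step is the equality analysis of this last identity. Writing $\hat u = e^{i\varphi}|\hat u|$ on $\{\hat u \ne 0\}$, equality in the triangle inequality forces $\varphi(\xi_1) + \cdots + \varphi(\xi_k)$ to depend only on $\xi_1 + \cdots + \xi_k$; a measurable solution of this Cauchy-type functional equation must be affine, $\varphi(\xi) = \alpha \cdot \xi + \beta$ with $\alpha \in \R^N$ and $\beta \in \R$. To make this rigorous one uses that the minimizer, being up to a positive multiple a solution of \eqref{eq:biharmonic-general-a-b}, is smooth and exponentially decaying, so that $\hat u$ is real-analytic and vanishes nowhere; this removes the measure-theoretic difficulties and is precisely the content of the rigidity results in \cite{BuLeSo-2019, LeSo-21}, which I would invoke. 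Since $u$ is real-valued, $\hat u(-\xi) = \overline{\hat u(\xi)}$ forces $e^{i\beta} = \pm 1$, hence $\hat u(\xi) = \pm e^{i\alpha \cdot \xi}\hat v(\xi)$, i.e.\ $u = \pm\, v(\cdot + \alpha)$. Thus $u$ is, up to a sign, a translate of the even function $v$, hence even about the point $-\alpha$; after translating that point to the origin we obtain $u(-x) = u(x)$ a.e., as claimed.

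The main obstacle I anticipate is exactly this rigidity step: passing from ``equality in the convolution inequality'' to ``$\hat u$ has affine phase'' is where the hypothesis $p \in 2\mathbb{N}$ is genuinely used --- for non-integer $p$ the $p$-norm is not multilinear in $\hat u$ and the whole scheme collapses --- and where one must handle null sets and the structure of $\{\hat u \ne 0\}$ with care (via the analyticity of $\hat u$). The remaining ingredients --- that $v \in H$, the identity $\qq_{a,b}(v) = \qq_{a,b}(u)$, and the Plancherel/convolution computation of $\|v\|_p^p$ --- are routine.
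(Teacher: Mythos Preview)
Your proposal is correct and follows essentially the same approach as the paper: both rely on the Fourier symmetrization method of \cite{BuLeSo-2019, LeSo-21}, the paper by directly citing \cite[Theorem~2]{BuLeSo-2019} together with the exponential decay of minimizers, while you spell out the inequality step ($\hat v := |\hat u|$ preserves $\qq_{a,b}$ and increases $\|\cdot\|_p$ for $p \in 2\N$) before invoking the same references for the equality/rigidity analysis. The only minor slip is the claim that $\hat u$ ``vanishes nowhere'' --- real-analyticity from exponential decay only guarantees that the zero set has measure zero --- but since you explicitly defer that rigidity step to \cite{BuLeSo-2019} this does not affect the argument.
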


\begin{rmk*}
{\em From Theorem \ref{theorem-nonradial} we recall that breaking of radial symmetry for minimizers $u \in H$ must occur in the range $N \geq 2$ and $2 < p < 2_*$. In this range, the only compatible choice with Lemma \ref{lem:even} is $N=2$ and $p=4 < 2_*$. }
\end{rmk*}

\begin{proof}
We can invoke the general strategy developed in \cite{BuLeSo-2019} based on Fourier methods. For the reader's convenience, we provide some details on how to apply these results to our setting. In \cite{BuLeSo-2019}, one consider minimizers of functionals of the form $J : H^s(\R^N) \setminus  \{ 0 \} \to \R$ with
$$
J(f) = \frac{\langle f, (P(D) + \lambda) f\rangle}{\| f \|_{L^{2 \sigma +2}}^2}.
$$ 
Here $P(D)$ is an elliptic (pseudo-)differential operator of order $2s > 0$ with Fourier symbol $p(\xi)$ and $0 < \sigma < \sigma_*$ with $\sigma_* = \frac{2s}{N-2s}$ if $s < N/2$ and $\sigma_* = \infty$ if $s \geq N/2$. The constant $\lambda \in \R$ is assumed to satisfy 
$$
\inf_{\xi \in \R^N} p(\xi) + \lambda > 0,
$$ 
which guarantees the norm equivalence $\| f \|_{H^s}^2 \simeq \langle f, (P(D) + \lambda) f \rangle$. Adapted to our setting, we have $s=2$ and
$$
P(D) = \Delta^2 + 2 a\Delta, \quad \lambda = b, \quad \sigma = \frac{p-2}{2}.
$$ 
Notice that $\inf_{\xi \in \R^N} p(\xi)+\lambda > 0$ is equivalent to $b > a^2$. Furthermore, it is elementary to check that the condition $0 < \sigma < \sigma_*$ is equivalent to $2 < p < 2_*$. Thus the minimizers $u \in H$ for $R_{a,b}(p)$ are exactly minimizers of the functional $J(f)$ above with the our choice of $P(D), \lambda, \sigma$ above. 

If $p \in 2 \N$ is an even integer (and thus $\sigma = \frac{1}{2} (p-2) \in \N$ is an integer), we can apply \cite{BuLeSo-2019}[Theorem 2] to deduce that any minimizer $u \in H$ for $R_{a,b}(p)$ must be an even function, i.\,e., it holds that
$$
u(-x) = u(x) \quad \mbox{for a.\,e.~$x \in \R^N$},
$$
using also that the symbol $p(\xi) = |\xi|^4 -2 a |\xi|^2$ is real-valued and even together with the fact that we have the exponential decay property
$$
e^{\mu |\cdot |} u \in L^2(\R^N)
$$
for some $\mu > 0$. In fact, the later exponential decay property can be deduced from Paley--Wiener type arguments and complex continuation arguments, which are classical for the operator $P(D)=\Delta^2 + 2a \Delta$ having an analytic symbol; see also the remark following \cite{BuLeSo-2019}[Theorem 2] for exactly this example for $P(D)$.

This completes the sketch of the proof of Lemma \ref{lem:even}. \end{proof} 

{\bf Acknowledgment:} The second author would like to thank Antonio Fern\'andez for valuable comments.

\end{appendix}

\bibliographystyle{plain}

\end{document}